\newtheorem{thm}{Theorem}[section]
\newtheorem{lemma}[thm]{Lemma}
\newtheorem{cor}[thm]{Corollary}
\theoremstyle{remark}
\newtheorem{rem}[thm]{Remark}
\theoremstyle{definition}
\newtheoremstyle{Claim}{}{}{\itshape}{}{\itshape\bfseries}{:}{ }{#1}
\theoremstyle{Claim}
\newcommand{\R}{\mathbb{R}}
\newcommand{\aac}{\`a}
\newcommand{\eps}{\varepsilon}
\theoremstyle{plain}
 \numberwithin{equation}{section}
\begin{document}

\title[]{Gradient estimates for quasilinear elliptic Neumann problems with unbounded first-order terms}
\author{Marco Cirant}
\address{Dipartimento di Matematica ``Tullio Levi-Civita'', Universit\`a degli Studi di Padova, 
via Trieste 63, 35121 Padova (Italy)}
\curraddr{}
\email{cirant@math.unipd.it}
\thanks{}
\author{Alessandro Goffi}
\address{Dipartimento di Matematica e Informatica ``Ulisse Dini'', Universit\`a degli Studi di Firenze, 
viale G. Morgagni 67/a, 50134 Firenze (Italy)}
\curraddr{}
\email{alessandro.goffi@unifi.it}
\thanks{}
\author{Tommaso Leonori}
\address{Dipartimento di Scienze di Base ed Applicate per l'Ingegneria, Sapienza Universit{\aac} di Roma, Via Antonio Scarpa 10, 00161 Roma (Italy)}
\curraddr{}
\email{tommaso.leonori@uniroma1.it}
\thanks{}
\subjclass[2020]{35B65, 35G30, 35J91, 35J92.}
\keywords{$p$-Laplacian, Gradient bounds, Maximal regularity, Neumann boundary condition}
\thanks{The authors are members of the Gruppo Nazionale per l'Analisi Matematica, la Probabilit\`a e le loro Applicazioni (GNAMPA) of the Istituto Nazionale di Alta Matematica (INdAM). M. Cirant and A. Goffi were partially supported by the INdAM-GNAMPA Project 2022 ``Propriet\`a quantitative e qualitative per EDP non lineari con termini di gradiente'', the Project funded by the EuropeanUnion – NextGenerationEU under the National Recovery and Resilience Plan (NRRP), Mission 4 Component 2 Investment 1.1 - Call PRIN 2022 No. 104 of February 2, 2022 of Italian Ministry of University and Research; Project 2022W58BJ5 (subject area: PE - Physical Sciences and Engineering) ``PDEs and optimal control methods in mean field games, population dynamics and multi-agent models" and by the King Abdullah University of Science and Technology (KAUST) project CRG2021-4674 ``Mean-Field Games: models, theory and computational aspects". A. Goffi and T. Leonori were partially supported by the INdAM-GNAMPA Project 2023 ``Problemi variazionali/nonvariazionali: interazione tra metodi integrali e principi del massimo''. This research was partially carried out while A. Goffi was Postdoctoral  research fellow at ``Dipartimento di Scienze di Base ed Applicate per l'Ingegneria, Sapienza Universit{\aac} di Roma''.}

\date{\today}

\begin{abstract}
This paper studies global a priori gradient estimates for divergence-type equations patterned over the $p$-Laplacian with first-order terms having power-growth with respect to the gradient under suitable integrability assumptions on the source term of the equation. The results apply to elliptic problems with unbounded data in Lebesgue spaces complemented with Neumann boundary conditions posed on convex domains of the Euclidean space. 
\end{abstract}

\maketitle

\section{Introduction}
A well-known result in the theory of linear elliptic equations states that any strong solution to the Poisson equation $-\Delta u=f\in L^q$ posed on a  bounded open set $\Omega$ of $\R^N$ with enough regular boundary satisfies the so-called maximal $L^q$-regularity estimate, i.e. an estimate on $\|D^2u\|_{L^q }$ holds in terms of $\|f\|_{L^q }$, with linear dependence. Then, optimal gradient estimates follow by Sobolev embeddings, depending on the range of the exponent $q$ with respect to the dimension $N$ of the ambient space.\\
The aim of this manuscript is to provide a quasilinear counterpart of these maximal $L^q$-regularity properties for a class of quasilinear elliptic boundary-value problems with diffusion in divergence form and lower-order terms with power growth in the gradient. The class of diffusions we are able to encompass is patterned over the $p$-Laplacian, the main model being
\[
\lambda u-\mathrm{div}(|Du|^{p-2}Du)+|Du|^\gamma=f(x)\quad \text{ in }\Omega\ ,
\]
for $p\in(1,\infty)$, $\gamma>p-1$,  $f\in L^q(\Omega)$ for some $q>1$ and $\lambda\geq0$. For suitable solutions $u:\Omega\to\R$ (obtained by approximation) to Neumann boundary-value problems posed on convex $C^2$ domains, our main results give, for any $\gamma>p-1$, the following a priori estimates
\begin{equation}\label{quasilip}
p>1\,, \quad f\in L^N(\Omega)  \implies Du\in L^r(\Omega),\quad  1\leq r<\infty, 
\end{equation}
and 
\begin{equation}\label{maximal}
p\geq 2\,, \quad f\in L^q(\Omega)\ ,\quad q\geq \frac{N(\gamma-(p-1))}{\gamma}\text{ and } \ q>2  \implies |Du|^\gamma\in L^q(\Omega).
\end{equation}
In the case $p=2$, estimate \eqref{quasilip} was proved in \cite{Lions85}, whilst an estimate of the form \eqref{maximal} was conjectured by P.-L. Lions \cite{Napoli,LionsSeminar} and has been the focus of a recent intensive research, especially in connection with the analysis of Mean Field Games systems: the recent work \cite{CGell} addressed the conjecture of maximal regularity for the viscous problem in the periodic case, the later developments in \cite{GP,G} treated global regularity for boundary-value problems with Neumann and Dirichlet boundary conditions respectively, while interior estimates in the superquadratic regime were the subject of the paper \cite{CV} through a different approach based on a blow-up argument. Along this line, we mention the analysis of the parabolic problem carried out in \cite{CGpar} by means of a rather different (nonlinear) duality method, even combined with the Bernstein technique \cite{cg20}. The recent paper \cite{c22par} addresses time-dependent problems with superquadratic nonlinearity via blow-up and duality methods. Finally, the work \cite{Gnondiv} contains interior estimates for stationary and parabolic equations with quadratic growth and diffusion in nondivergence form.\\
A peculiar feature of our results is that they hold for a degenerate/singular diffusion in the so-called supernatural growth regime of the first-order term, i.e. when $\gamma>p$. Though the sublinear and the subnatural growth, respectively $\gamma<p-1$ and $\gamma<p$, have been widely analyzed for many years, see for example \cite{Serrin1,Serrin2,Simon,Wang,GT,Lieb91,LUbook,AmannCrandall,Maugeri} and the references therein, the literature is to our knowledge poor in the supernatural growth regime. In particular, few results are available for PDEs with first-order terms having power-like growth in the gradient: Lipschitz bounds were obtained for fully nonlinear singular equations in \cite{BDL} when $f\in W^{1,\infty}$ through viscosity solutions' methods and by pointwise Bernstein arguments in \cite{LPcpde}, while H\"older estimates for distributional semi-solutions were obtained for $L^q$ or more general Morrey right-hand sides in \cite{Glio}. These results have their roots in the earlier research carried out for the case $p=2$ in \cite{Lions80,CDLP,DP}. Nonetheless, we mention that some of the results we obtain, especially concerning \eqref{maximal}, are new even in the regime $p-1<\gamma<p$ and/or $q\leq N$. 

An additional distinctive feature with respect to the previous works on the subject is that our arguments cover all the range of exponents $q$ up to the endpoint threshold $q=\frac{N(\gamma-(p-1))}{\gamma}$, for any $\gamma>p-1$. In such a limiting case, we also enlighten the role of zero-th order term.  This was first observed in \cite{CGpar} for parabolic problems and then in \cite{G} for elliptic equations equipped with Dirichlet boundary conditions in the case $p=2$, when the first-order term has subquadratic growth. The parabolic superquadratic case with linear diffusion has been recently addressed in \cite{c22par}, up to the endpoint threshold. We emphasize that such a lower bound on the summability exponent $q$ is in general necessary for the validity of the maximal regularity property, see \cite{CGell} for a counterexample in the linear case $p=2$, and even for existence issues \cite{HMV}.\\
Finally, we emphasize that in contrast to compactness-based methods \cite{GigaTsubouchi,CV}, our approach is, in some cases, able to provide quantitative bounds, see e.g. the estimate in the next Theorem \ref{main1}. This is related with the validity of the so-called ``strong'' maximal regularity for such nonlinear PDEs, which is at this stage widely open unless $p=2$ and $n=2$, cf. \cite{Napoli}. These kind of stronger higher regularity properties were studied in Theorem 4.3 of \cite{CianchiMazyaJEMS} for $-\Delta_pu=f$.\\

The approach used in our main results (Theorem \ref{main1} and Theorem \ref{main2}) is based on the so-called integral Bernstein method, see \cite{Lions85} and the later papers \cite{LasryLions,BardiPerthame} together with the recent developments \cite{CGell,GP,cg20}. 

In particular, the basic idea in Theorem \ref{main1} relies on using a $\hat p$-Laplacian of $u$ as a test function in the weak formulation of the problem, with a suitably large $\hat p$. Theorem \ref{main2} exploits a delicate argument that still revolves around testing by a similar function, but also exploits a continuity argument that hinges on the integration on super-level sets of the gradient. This latter technique has been inspired by \cite{CGell}, and the previous work on integral estimates for solutions to quasilinear elliptic problems in the subnatural regime \cite{GMP14}. Unfortunately, to apply such a technique we need to add the hypothesis  $p\geq2$. 
Both the results exploit the coercivity of the gradient term via a  weighted Bochner identity. Roughly speaking, for the $p$-Laplacian diffusion in nondivergence form
\[
|Du|^{p-2}\left(\Delta u+(p-2)\frac{\Delta_\infty u}{|Du|^2}\right)=:|Du|^{p-2}\mathcal{A}(D^2u)
\] 
we have the following identity solved by $w=|Du|^2$ (considering the leading operator $|Du|^{p-2}\Delta u$ and $|Du|^{p-2}$ as a ``coefficient'')
\[
|Du|^{p-2}\Delta w=2|Du|^{p-2}|D^2u|^2+2|Du|^{p-2}Du\cdot D\Delta u.
\]
From this, we use a ``generalized'' Cauchy-Schwarz inequality
\[
|D^2u|\geq c(N,p)\mathcal{A}(D^2u),
\]
plug the equation and exploit the lower bound
\[
|Du|^{p-2}|D^2u|^2\geq \left(c_1\frac{|Du|^{2\gamma}}{2}-c_2(f-\lambda u)^2\right)|Du|^{2-p}.
\]
The last inequality shows that (part) of the second order term grants additional coercivity, crucial to conclude the higher-regularity properties for this class of equations. This term is also fundamental to deduce a second order estimate and, notably, obtain in the limit $\gamma=0$ some known properties for the $p$-Poisson equation via a different approach, cf. Remark \ref{so}. We believe this is a neat difference with respect to classical references dealing with the integral Bernstein method for quasilinear equations, cf. \cite{CianchiMazyaCPDE,DiB,DiBFriedman,GL}. The derivation of this chain of inequalities will be discussed in detail in Lemma \ref{diff2}.

\smallskip 

Since solutions to our problems, even without gradient dependent terms, are in general no more regular than $C^{1,\alpha}(\overline{\Omega})$ for some $\alpha\in(0,1)$, we use an approximation procedure considering the uniformly elliptic problem
\[
\lambda u_\eps-\mathrm{div}((\eps+|Du_\eps|^2)^{\frac{p-2}{2}}Du_\eps)+H(Du_\eps)=f_\eps(x) \qquad \mbox{ in } \ \Omega
\]
where $\eps>0$,  $\lambda\geq0$ and $f_\eps$ is a smooth approximation of $f$, which admits a smooth solution $u_\eps$ (see \cite{LPcpde}), and prove estimates independent of $\eps$.\\

\medskip 

As far as gradient estimates for quasilinear elliptic equations are concerned, this classical problem has been extensively analyzed, especially when the equation is driven by the sole $p$-Laplacian. Classical works on the subject typically consider problems up to the natural growth $\gamma = p$, and include for example the papers \cite{U,BocGal,DibNA,DM,Iwaniec,Lewis,Evans,Lieb91}, that also study higher regularity properties at the level of $C^{1,\alpha}$ spaces. More recent papers \cite{MingioJEMS,CianchiMazyaCPDE,CianchiMazyaJEMS,CianchiMazyaARMA,BMCCM,BeckMingione,Brasco} focused on gradient regularity estimates with unbounded right-hand side at the level of Lebesgue and Lorentz classes, treating both global and local bounds. Some recent works have been also concerned with the optimal second-order regularity for such problems, see \cite{BrascoSantambrogio,CianchiMazyaJMPA,CianchiMazyaARMA2,BeiraoCrispo,Dong}, or even to study the gradient regularity of solutions driven by the mixed operator $-\Delta_1-\Delta_p$, see \cite{GigaTsubouchi,Tsubouchi}.\\
Regarding the assumptions on the integrability of $f$ in the model case of the $p$-Poisson equation (possibly perturbed with gradient terms having sublinear growth), interior  Lipschitz bounds have been studied in \cite[Theorem 1 and the subsequent Remark]{DibNA} and \cite[Remark 7.4]{DiBFriedman} under the assumption that $q>\frac{Np}{p-1}$. Interior estimates in $W^{1,\infty}$ have been then obtained in \cite{Lieb91} under the weaker integrability assumption $q>N$, while the work \cite{Lieb93} assumes $f$ controlled in the Morrey class $\mathcal{L}^{1,s}$, $s>N$. Optimal gradient regularity estimates for boundary-value problems of the $p$-Poisson equation can be found in \cite{CianchiMazyaCPDE,CianchiMazyaJEMS}.\\
We further mention that the case of slowly increasing first-order terms, e.g. when $\gamma\leq p-1$, has been already treated via techniques from nonlinear potential theory, see \cite{KMrmi} and the references therein, but when the right-hand side datum $f\in L^\infty$ and $p\geq 2$. In the case $p<2$, further restrictions on the growth $\gamma$ have been imposed, see again \cite{KMrmi}, at least in the parabolic framework. We refer to \cite{DuzaarMingioneLincei,DuzaarMingioneCalcVar,DuzaarMingioneAM,MingioJEMS,KMsurvey,MingioPalaSurvey} for more details on the literature of nonlinear potential theory. Earlier results for parabolic problems driven by the $p$-Laplacian with first-order gradient terms growing at most as $|Du|^{p-1}$ and unbounded source terms in Lebesgue spaces can be found in \cite[Chapter VIII, Section 1-(ii)]{DiB}, under the restriction $f\in L^q_{x,t}$, $q>N+2$.\\
Preliminary forms of the integral Bernstein method appeared in \cite{DiB,DiBFriedman,U}, but we point out that they adapt at most for sublinear powers $\gamma$ of the gradient. Our refinement of the Bernstein technique, instead, allows to handle problems with coercive, in fact supernatural, gradient terms.\\
 More recently, gradient estimates in the framework of renormalized/approximated solutions in terms of right-hand sides in Marcinkiewicz spaces have been studied in \cite{AlvinoAnnali,4Nap}, see also the related work \cite{PhucAdv} and the references therein.\\

We conclude by saying that the convexity assumption on $\Omega$ in Theorems \ref{main1} and \ref{main2} allows us to give a sign on the boundary integrals coming from the diffusion term, since the second fundamental form on the boundary of a convex set is semidefinite, see Lemma \ref{signw}. We believe that this constraint can be removed using some test function argument as in \cite{LPcpde,PorrCCM}, see Remark \ref{conv}. The literature sometimes encompasses domains less regular than $C^2$, cf. e.g. \cite{Grisvard}, but we do not pursue this direction, referring to Remark \ref{regdom} for further references and discussions. Still, we do not know whether the restriction $p\geq2$ in deriving an estimate like \eqref{maximal} is really necessary or a drawback of our method.

\medskip
\textit{Plan of the paper}. Section \ref{s_N} will be devoted to the proof of gradient estimates in the full range $p> 1$, and integrability of $f$ close to $N$, while Section \ref{s_max} will address the full range of integrability of $f$, but under the restriction $p \ge 2$. Note that the second case develops the arguments which are used in the first case, so it might be useful to start with Section \ref{s_N} and then proceed with Section \ref{s_max} in order to get acquainted with the technicalities. Moreover, having the two sections different developments in a few steps, two slightly different sets of assumptions will be used (though they will both include the model problem described above).

\section{Gradient estimates for $p > 1$ and $q$ close to $N$}\label{s_N}
We consider the following problem
\begin{equation}\label{divappHJ}
\begin{cases}
-\mathrm{div}(a(|D u|^2)D u) + H(Du)=f(x)&\text{ in } \Omega,\\
\partial_\nu u=0&\text{ on } \partial\Omega,
\end{cases}
\end{equation}
where $\nu$ denotes the outward unit vector on $\partial\Omega$ and $a:[0,\infty)\to[0,\infty)$ is of class $C^1(0,\infty)$. We also assume that there exist $p>1$ and constants $\bar c_a,\bar C_a,>0$, $C_a \in \R$ such that 
\begin{equation}\tag{A1}\label{Aa}
-1<\inf_{t>0}\frac{2ta'(t)}{a(t)}\leq \sup_{t>0}\frac{2ta'(t)}{a(t)}\leq C_a<\infty\,,
\end{equation}
and 
\begin{equation}\tag{A2}\label{A1}
\bar c_at^{\frac{p-2}{2}}\leq a(t)\leq \bar C_at^{\frac{p-2}{2}}\,,\quad \forall t>0
\end{equation}
Notice that \eqref{Aa} implies (in particular where $a'(t)<0$) the existence of a constant $\widetilde{c}_a>0$ such that
\begin{equation}\tag{A3}\label{A2}
2ta'(t)+a(t)\geq \widetilde{c}_a a(t)\ ,\quad \forall t>0
\end{equation}
and
\begin{equation}\tag{A4}\label{A3}
t\left|\frac{a'(t)}{a(t)}\right|\leq \frac{C_a}{2}\,,\quad \forall t>0. 
\end{equation}

As far as the lower order term is concerned,  we assume that $H=H(\xi)\in C^2(\R^N \setminus \{0\})\cap C^0(\R^N  )$ and $H$ is radial, i.e. $H(\xi)=h(|\xi|)$ for some $h\in C^2(\R^+)\cap C^0[0,+\infty)  $ and that there exists  constants $ c_H, C_H>0$, $\gamma>1$, such that
\begin{equation}\tag{H1}\label{H1}
c_H|\xi|^{\gamma}\leq H(\xi), \quad \forall \xi \in \R^N\,,
\end{equation}
and
\begin{equation}\tag{H2}\label{H2}
 |H_{\xi\xi}(\xi)|\leq C_H|\xi|^{\gamma-2}\,, \qquad \forall \xi \in \R^N \setminus \{0\}.
\end{equation}

Finally we consider   the right-hand side datum  in the following way: 
\begin{equation}\tag{F}\label{F}
f\in L^q(\Omega) \quad \mbox{ for some  } \quad  q \geq N.
\end{equation}
The model of nonlinear equation that we have in mind   is the following: 
\[
-\mathrm{div}(|Du|^{p-2}Du)+|Du|^\gamma=f(x) \qquad \mbox{ in } \Omega.
\]
Observe that in particular, the $p$-Laplacian satisfies the previous hypotheses with  $a(t)=t^{\frac{p-2}{2}}$, so that $\inf_{t>0}\frac{2ta'(t)}{a(t)}=\sup_{t>0}\frac{2ta'(t)}{a(t)}=p-2$, $\widetilde{c}_a=p-1$ and  $\bar c_a=\bar C_a=1$. 

As already mentioned in the introduction, we cannot expect solutions to \eqref{divappHJ} to be  more regular than $C^{1,\alpha}(\overline{\Omega})$, so that we need to argue by approximation. Thus we  consider the approximated problem driven by a uniformly elliptic operator $a=a(t)$ with $t=|Du|^2+\eps$, $\eps>0$, and a regularized right-hand side $f_\eps\in C^\infty(\Omega)$ (for example, a regularization of $f$ by a convolution with a smoothing kernel), namely
\begin{equation}\label{divappHJ2}
\begin{cases}
-\mathrm{div}(a(|D u|^2+\eps)D u) + H(Du)=f_\eps(x)&\text{ in } \Omega,\\
\partial_\nu u=0&\text{ on } \partial\Omega .
\end{cases}
\end{equation}

Our goal is to prove   estimates that are independent from $\eps$, so that we can inherit them in the limit.

Before stating our first result, let us observe that for any $0 \le \eps \le 1$, we have
\[
\frac{c_H}2(\eps+|\xi|^2)^{\frac{\gamma}{2}} - c_\eps \leq H(\xi) \qquad \forall \xi \in \R^N \setminus \{0\}\,, 
\]

for some $c_\eps$ which vanishes as $\eps \to 0$; hence without loss of generalization we can  subtract such a constant in both sides of \eqref{divappHJ2} and turn \eqref{H1} into  
 \begin{equation}\label{Hbelbis}
\frac{c_H}2(\eps+|\xi|^2)^{\frac{\gamma}{2}} \leq H(\xi) \,, \qquad \forall \xi \in \R^N \setminus \{0\}\,.
\end{equation}
We now state the main result of this section. 
\begin{thm}\label{main1}
Let $\Omega\subset \R^N$, $N\geq 3$, be a $C^2$ convex domain and assume that \eqref{Aa}, \eqref{A1}, \eqref{H1}, \eqref{H2}, \eqref{F}   hold true with $\gamma>p-1$. Then, for any $\eta>1$ large enough,  
there exist two  positive constants $C_{\eta,1}, C_{\eta,2}$ depending on $p,N,c_H, C_H,\bar c_a,\bar C_a,\tilde c_a,C_a,|\Omega|,\eta$, but not on $\eps$, such that 
any smooth solution $u_\eps$ to \eqref{divappHJ2} satisfies  
\begin{equation}\label{est1}
\|Du_\eps\|_{L^\eta(\Omega)}\leq C_{\eta,1}+C_{\eta,2}\|f_\eps\|_{L^{q_{\eta}}(\Omega)}^\frac{1}{p-1}, \qquad \mbox{ with } \quad q_\eta \nearrow N   \qquad \mbox{ as } \quad \eta\to+\infty\,.
\end{equation}
\end{thm}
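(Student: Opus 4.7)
The plan is to work directly with the smooth solution $u_\eps$ of \eqref{divappHJ2} and follow the integral Bernstein strategy announced in the introduction, with basic quantity $w := |Du_\eps|^2+\eps$. The starting point is the weighted Bochner-type pointwise inequality that Lemma \ref{diff2} will provide: writing the equation in nondivergence form, so that its second-order part reads $a(w)\mathcal A(D^2u_\eps)$ with $\mathcal A$ as in the introduction, the identity $\tfrac12\Delta w = |D^2u_\eps|^2 + Du_\eps\cdot D\Delta u_\eps$ combined with the structural hypothesis \eqref{Aa}--\eqref{A1}--\eqref{A2}, a generalized Cauchy--Schwarz for $\mathcal A(D^2u_\eps)$, and the coercivity \eqref{H1} together with \eqref{H2} gives a pointwise inequality morally of the form
\[
a(w)\,|D^2u_\eps|^{2} \;\geq\; c_1\,|Du_\eps|^{2\gamma}\,w^{\frac{p-2}{2}} \;-\; c_2\,f_\eps^{2}\,w^{\frac{2-p}{2}},
\]
modulo terms that will have a favorable sign after integration.

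The second step is to multiply the Bochner inequality by $w^{\beta}$ with $\beta$ a free large parameter (this is equivalent to testing the weak formulation of \eqref{divappHJ2} with a $\hat p$-Laplacian of $u_\eps$, $\hat p \sim 2\beta+p$) and integrate over $\Omega$. After integrating by parts on the Hessian term one obtains, schematically,
\[
\int_\Omega a(w)\,|Dw|^{2}\,w^{\beta-1}\,dx \;+\; c_1\int_\Omega |Du_\eps|^{2\gamma} w^{\beta+\frac{p-2}{2}}\,dx \;\leq\; I_{\partial\Omega} \;+\; c_2\int_\Omega f_\eps^{2}\,w^{\beta+\frac{2-p}{2}}\,dx,
\]
plus lower-order contributions controlled by Young. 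The boundary integral $I_{\partial\Omega}$ is discarded using Lemma \ref{signw}: the convexity of $\Omega$ together with $\partial_\nu u_\eps = 0$ forces $\partial_\nu w\leq 0$ on $\partial\Omega$, giving $I_{\partial\Omega}$ the correct sign.

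In the third step, I would turn the first integral into a Sobolev norm: by the inequality $W^{1,2}(\Omega)\hookrightarrow L^{2N/(N-2)}(\Omega)$ applied to an appropriate power $w^{\kappa(\beta)}$, the gradient term controls $\|w^{\kappa(\beta)}\|_{L^{2^*}}^{2}$, while the $f_\eps$-term on the right-hand side is handled by H\"older's inequality, splitting $f_\eps^{2}$ against a power of $w$. Iterating the zeroth-order mean constraint obtained by integrating \eqref{divappHJ2} (which, via \eqref{H1} and Poincar\'e--Wirtinger, provides an a priori control of $\|Du_\eps\|_{L^{\gamma}}$ in terms of $\|f_\eps\|_{L^{1}}$) furnishes the starting lower norm needed to seed the interpolation. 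The resulting inequality
\[
\|w^{\kappa(\beta)}\|_{L^{2^{*}}}^{2} \;\leq\; C_1 \;+\; C_2\,\|f_\eps\|_{L^{q_\eta}}^{2}\,\|w^{\kappa(\beta)}\|_{L^{2^{*}}}^{\theta}, \qquad \theta<2,
\]
is absorbed by Young to deliver \eqref{est1} with $\eta = \eta(\beta)\to\infty$ and $q_\eta\nearrow N$ as $\beta\to\infty$; constants depend only on the structural parameters and not on $\eps$, so that the estimate passes to the limit.

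The main obstacle is precisely the bookkeeping of exponents so that everything is consistent: one has to (i) check that the coercive term $\int |Du_\eps|^{2\gamma}w^{\beta+(p-2)/2}$ dominates, after Young, the forcing term $\int f_\eps^{2}w^{\beta+(2-p)/2}$ through a correct split depending on the relative size of $\gamma$ and $p$; (ii) identify the Sobolev power $\kappa(\beta)$ and the dual H\"older exponent on $f_\eps$ so that $q_\eta\nearrow N$ precisely; and (iii) ensure the inequalities close in both the singular regime $p<2$ (where the weight $w^{(2-p)/2}$ is unbounded near $Du_\eps=0$, and \eqref{A2}--\eqref{A3} are essential to keep the algebra consistent) and the degenerate regime $p>2$. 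Once these three ingredients are aligned, the proof reduces to a routine Moser-type absorption.
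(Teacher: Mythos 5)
Your plan follows the paper's route in all its main lines: test with $\varphi=-2\,\mathrm{div}(Du_\eps\,w^\beta)$, use convexity and Lemma \ref{signw} to discard the boundary term, extract coercivity of order $w^{\beta+\gamma+\frac{2-p}{2}}$ by plugging the nondivergence form of the equation into the Hessian term, and close via Sobolev, H\"older and Young. The genuine gap is in your treatment of the Hamiltonian contribution $-\int_\Omega H(Du_\eps)\varphi\,dx$, which you dismiss as ``lower-order contributions controlled by Young.'' It is not lower order: after one integration by parts it reads $-\int_\Omega H_\xi(Du_\eps)\cdot Dw\,w^\beta\,dx$, of size $\int_\Omega |Du_\eps|^{\gamma-1}|Dw|\,w^{\beta}\,dx$, and any plain Young split (against either the $|Dw|^2$ term or the Hessian term) produces $C_\delta\int_\Omega w^{\beta+\gamma+\frac{2-p}{2}}\,dx$ --- \emph{exactly} the homogeneity of the coercive term you rely on, with a constant $C_\delta$ that is not small and therefore cannot be absorbed. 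The paper's Lemma \ref{Hphi} repairs this by writing $H_\xi(Du)\cdot Dw\,w^\beta=H_\xi(Du)\cdot D\big(\tfrac{w^{\beta+1}}{\beta+1}\big)$ and integrating by parts a second time (using \eqref{H2} on $H_{\xi\xi}$ and the Neumann condition to kill the new boundary term), which yields the decisive prefactor $\tfrac{1}{\beta+1}$; the bad term is then absorbed by taking $\beta$ large. Without this extra step the argument does not close.

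A second, smaller misstep: your item (i) proposes to dominate $\int_\Omega f_\eps^2 w^{\beta+\frac{2-p}{2}}\,dx$ by the coercive term $\int_\Omega w^{\beta+\gamma+\frac{2-p}{2}}\,dx$. A Young split in that direction requires $f_\eps\in L^{(2\beta+2\gamma+2-p)/\gamma}$, an exponent that diverges as $\beta\to\infty$ and would destroy the claim $q_\eta\nearrow N$. The correct split --- which your displayed schematic inequality does reflect --- is to H\"older the $f_\eps$-term against the Sobolev quantity $\big(\int_\Omega w^{(\beta+\frac p2)\frac{N}{N-2}}\big)^{\frac{N-2}{N}}$ on the left, giving $\|f_\eps\|^2_{L^{2\alpha}}$ with $2\alpha\nearrow N$, and then absorb by Young using $\beta+1-\tfrac p2<\beta+\tfrac p2$; this last Young is also what produces the exponent $\tfrac1{p-1}$ in \eqref{est1}. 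The coercive term is used instead to absorb the Sobolev remainder $\int_\Omega w^{\beta+\frac p2}\,dx$ at the cost of a constant depending on $|\Omega|$ --- this is where $\gamma>p-1$ enters --- so your Poincar\'e--Wirtinger seeding is unnecessary; no iteration is involved, the estimate is one-shot for each $\beta$.
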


\medskip 

From now on we drop the subscript $\eps$ for brevity. 

\medskip 

\begin{rem}
Due to the inclusion among Lebesgue spaces, as a byproduct of estimate \eqref{est1} we get \eqref{quasilip}.
\end{rem}

\begin{rem}
We remark that Theorem \ref{main1} leads to a (nonlinear) bound with sublinear dependence on the right-hand side datum. This is in line with the results for the $p$-Poisson equation found in \cite[Theorem 4.3-(ii)]{CianchiMazyaJEMS}. \end{rem}

We premise the following standard result that will allow to handle the boundary integrals, referring for the proof to \cite[Lemma 2.3]{PorrCCM}.
\begin{lemma}\label{signw}
Let $u \in C^2 (\overline{\Omega})$ be such that $\partial_\nu u=0$ on $\partial\Omega$. If $\Omega$ is convex, then $\partial_\nu |Du|^2\leq0$ on $\partial\Omega$.
\end{lemma}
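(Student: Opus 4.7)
The plan is to reduce everything to an identity involving the second fundamental form of $\partial\Omega$, whose sign is controlled by convexity. The only tool needed is first and second order differentiation of the boundary condition.

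First I would expand
\[
\partial_\nu |Du|^2 = 2\, D^2u(Du,\nu) \qquad \text{on } \partial\Omega,
\]
so the statement reduces to showing $D^2u(Du,\nu)\le 0$ on $\partial\Omega$. The Neumann condition $Du\cdot\nu=0$ means that at every boundary point the vector $Du$ is tangent to $\partial\Omega$; hence it is a legitimate direction in which to differentiate the identity $Du\cdot\nu=0$ along $\partial\Omega$.

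Next I would take the tangential derivative of $Du\cdot\nu=0$ in the direction $Du$ itself (extended smoothly to a tangent vector field near the boundary point, or viewed intrinsically on $\partial\Omega$). This yields
\[
0 = D_{Du}(Du\cdot\nu) = D^2u(Du,\nu) + Du \cdot D_{Du}\nu,
\]
and therefore
\[
D^2u(Du,\nu) = -\, Du \cdot D_{Du}\nu = -\,II(Du,Du),
\]
where $II$ denotes the second fundamental form of $\partial\Omega$ computed with respect to the outward unit normal $\nu$. Since $\Omega$ is convex and $\nu$ points outward, $II$ is positive semidefinite on the tangent space of $\partial\Omega$, so $II(Du,Du)\ge 0$ and the conclusion $\partial_\nu|Du|^2\le 0$ follows.

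There is no real obstacle here beyond bookkeeping of the sign/orientation convention: one must ensure that the second fundamental form in use is the one that is $\ge 0$ on convex domains with outward normal (a sanity check on the unit ball, where $\nu(x)=x$ gives $D_X\nu = X$ and hence $II(X,X)=|X|^2\ge 0$, fixes the convention). Once this is pinned down, the proof is a two–line differentiation argument and the full statement can simply be referred to \cite[Lemma 2.3]{PorrCCM}.
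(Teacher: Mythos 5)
Your proof is correct: expanding $\partial_\nu|Du|^2=2D^2u(Du,\nu)$, differentiating the Neumann condition tangentially along the (tangential) direction $Du$, and invoking the positive semidefiniteness of the second fundamental form on a convex domain is exactly the standard argument behind this lemma. The paper itself gives no proof and simply defers to \cite[Lemma 2.3]{PorrCCM}, which is proved in precisely this way, so your argument matches the intended one.
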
 
The proof of the a priori estimate will be accomplished through some preliminary lemmas. In the sequel we will assume all the hypotheses listed in Theorem \ref{main1}. 

\medskip 

Let us denote by 
 \[w=|Du|^2+\eps\] 
 and let us consider the equation in \eqref{divappHJ2}. Multiplying it  by a smooth enough function $\varphi$ and integrating by parts, we deduce, since $u$ satisfies the Neumann boundary condition,  that $u$ solves  the following identity: 
\begin{equation}\label{vareq}
\ \int_\Omega a(w) D u \cdot D \varphi=\int_\Omega \big(f(x)-H(Du(x))\big)\varphi\,.
\end{equation}

The main idea in order to prove Theorem \ref{main1} is to choose 
 $$\varphi=-2\mathrm{div}\big(Du (\eps+|Du|^2)^\beta\big)=-2\sum_{j=1}^N\partial_{x_j} \big(\partial_{x_j}u\ w^\beta\,\big)\,,$$  with $\beta>1$ to be specified later: this essentially amounts to test against a multiple of a regularized $(2\beta)$-Laplacian. 

\medskip 

We first deal with the diffusive term and obtain the following bound from below.

\begin{lemma}\label{diff1}
There exist constants $\zeta_1,\zeta_2>0$,   depending on $\widetilde{c}_a, \bar c_a$, such that 
\begin{equation}\label{l25}
\int_\Omega a(w)Du\cdot D\varphi\,dx\geq \zeta_1\int_\Omega a(w)|D^2u|^2w^{\beta}\,dx+\beta \zeta_2\int_\Omega |Dw|^2w^{\beta-1+\frac{p-2}{2}}\,dx.
\end{equation}
\end{lemma}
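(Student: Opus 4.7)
The key is to unpack the test function $\varphi = -2\sum_j \partial_j(u_j w^\beta)$ via a double integration by parts that produces the good second-order and gradient-of-$w$ terms simultaneously. I would first write
\[
\int_\Omega a(w) Du \cdot D\varphi\, dx = -2\sum_{i,j}\int_\Omega a(w) u_i\, \partial_j \partial_i(u_j w^\beta)\, dx
\]
and then transfer one $\partial_j$ onto the factor $a(w) u_i$. This produces a boundary integral and a bulk integral. Expanding $\partial_i(u_j w^\beta) = u_{ij} w^\beta + \beta u_j w^{\beta-1}\partial_i w$ and summing, the boundary contribution collapses to a multiple of $-\int_{\partial\Omega}a(w) w^\beta \partial_\nu w\,d\sigma$: the $\beta$-piece carries a factor of $\partial_\nu u = 0$ and vanishes, while the other piece is handled by the identity $\sum_i u_i u_{ij} = \tfrac12 \partial_j w$. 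Lemma \ref{signw} supplies the crucial sign $\partial_\nu w \leq 0$, so this boundary term has the right sign and may be dropped.

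In the bulk I would again use $\sum_i u_i u_{ij} = \tfrac12 \partial_j w$ (and $\sum_{i,j} u_{ij}^2 = |D^2 u|^2$) to expand the product rule into four pieces:
\[
a(w) w^\beta |D^2 u|^2 \;+\; \tfrac{\beta}{2} a(w) w^{\beta-1}|Dw|^2 \;+\; \tfrac12 a'(w) w^\beta |Dw|^2 \;+\; \beta a'(w) w^{\beta-1}(Du \cdot Dw)^2.
\]
The first term directly gives $\zeta_1 \int_\Omega a(w) w^\beta |D^2 u|^2$ (with $\zeta_1 = 2$), and assumption \eqref{A1} turns $\tfrac{\beta}{2} a(w) w^{\beta-1}|Dw|^2$ into a positive multiple of $\beta w^{\beta-1+(p-2)/2}|Dw|^2$. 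It remains to absorb the two $a'$-terms.

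When $a'(w) \geq 0$ the absorption is trivial, since both $a'$-terms are then nonnegative. The real work is the singular case $a'(w) < 0$, where I would use the Cauchy--Schwarz bound $(Du \cdot Dw)^2 \leq w |Dw|^2$ (with the inequality reversed by $a' < 0$) to replace the awkward $\beta a'(w) w^{\beta - 1}(Du \cdot Dw)^2$ with the cleaner $\beta a'(w) w^\beta |Dw|^2$. Combining with the remaining $a'$-contribution reduces the problem to a pointwise lower bound on $\beta a(w) + (1 + 2\beta) w a'(w)$, and here assumption \eqref{A2} in the equivalent form $wa'(w) \geq \tfrac{\tilde c_a - 1}{2}\,a(w)$ takes over: the resulting coefficient is $\bigl[\beta \tilde c_a + \tfrac{\tilde c_a - 1}{2}\bigr] a(w)$, which exceeds $\tfrac{\beta \tilde c_a}{2} a(w)$ once $\beta$ is larger than an explicit threshold depending only on $\tilde c_a$. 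A final use of \eqref{A1} delivers the factor $w^{(p-2)/2}$ and completes the proof.

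The main obstacle is exactly this absorption step in the singular regime $p < 2$: both $a'$-terms are of the wrong sign, and the Cauchy--Schwarz bound on $(Du \cdot Dw)^2$ together with the coercivity built into \eqref{A2} must cooperate in just the right way. Everything else is a bookkeeping exercise around two integrations by parts.
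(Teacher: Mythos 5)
Your proof is correct and follows the paper's argument in all essentials: the same double integration by parts, the same identification and sign of the boundary term via $\partial_\nu u=0$ and Lemma \ref{signw}, the same four-term bulk expansion using $\sum_i u_iu_{ij}=\tfrac12\partial_jw$, and the same case split on the sign of $a'$ with the Cauchy--Schwarz bound $(Du\cdot Dw)^2\le w|Dw|^2$ to handle the cross term. The one place you diverge is the absorption of the remaining negative term $\tfrac12 a'(w)w^\beta|Dw|^2$: you keep it in the $|Dw|^2$ group and beat it by taking $\beta\ge(1-\widetilde c_a)/\widetilde c_a$, whereas the paper instead uses $|Dw|^2\le 4w|D^2u|^2$ to move it into the Hessian group, where \eqref{A2} gives the coefficient $2a(w)+4wa'(w)\ge 2\widetilde c_a\,a(w)$ for every $\beta>1$. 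Your variant therefore proves the lemma only for $\beta$ above an explicit threshold depending on $\widetilde c_a$ (and yields $\zeta_1=2$ rather than $2\widetilde c_a$); this is harmless here, since $\beta$ is sent to infinity in the proof of Theorem \ref{main1}, but it is worth flagging that the statement as written is claimed for the fixed, as-yet-unspecified $\beta>1$ introduced before the lemma.
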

\begin{proof}
 We use that $\partial_\nu |Du|^2\leq0$ on $\partial \Omega$ by Lemma \ref{signw}, and  after integrating by parts to get
\begin{align*}
 \sum_{i=1}^N\int_\Omega a(w)\partial_{x_i}u\partial_{x_i}\varphi\,dx &=-2\sum_{i,j=1}^N\int_\Omega a(w)\partial_{x_i}u\partial_{x_i}\big(\partial_{x_j}(\partial_{x_j}u\ w^\beta)\big)\,dx\\
&\geq 2\sum_{i,j=1}^N\int_\Omega\partial_{x_j}\big(a(w)\partial_{x_i}u\big)\partial_{x_i}\big(\partial_{x_j}u\ w^\beta\big)\,dx\\
&=2\sum_{i,j=1}^N\int_\Omega \left[a'(w)\partial_{x_j}w\partial_{x_i}u+a(w)\partial_{x_ix_j}u\right]\cdot\left[\partial_{x_ix_j}u\ w^\beta+\beta\partial_{x_j}u\ \partial_{x_i}w\  w^{\beta-1}\right]\,dx\\
&=  
2\int_\Omega a(w)|D^2u|^2w^{\beta}\,dx
+
2\beta\int_\Omega(Dw\cdot Du)^2a'(w)w^{\beta-1}\,dx
\\
&
+\int_\Omega \beta a(w)   w^{\beta-1} |Dw|^2\,dx
+\int_\Omega  a' (w) w^{\beta} |Dw|^2\,dx  \quad := J
\,.
\end{align*}
 It is easy to observe that when  $a'\geq0$ then  
\[
 {J} \geq 2\int_\Omega a(w)|D^2u|^2w^{\beta}\,dx +  \beta  \int_\Omega |Dw|^2 a(w) w^{\beta-1} \,dx.
\]
On the contrary, when  $a'(t)<0$ we have,   since $|D w|^2\leq 4w |D^2u|^2$ and by \eqref{A2}  
\begin{multline*}
 {J} \geq 
\beta  \int_\Omega \Big[   a(w) +2  w a'(w)        \Big] |Dw|^2  w^{\beta-1} \,dx 
+ 2 \int_\Omega a(w)|D^2u|^2w^{\beta}\,dx
+ 4\int_\Omega a'(w)  w |D^2u|^2w^{\beta}\,dx
\\
 \geq  \beta \widetilde{c}_a   \int_\Omega  a(w)|Dw|^2w^{\beta-1 }\,dx 
 + 
2 \widetilde{c}_a \int_\Omega a(w)|D^2u|^2w^{\beta}\,dx\,.
\end{multline*}
Using now \eqref{A1}, we deduce that there exist  $\zeta_1, \zeta_2 >0$,   depending on $\widetilde{c}_a,\bar c_a$, such that \eqref{l25} holds true. 
\end{proof}

We now elaborate the first term on the right-hand side of the inequality in Lemma \ref{diff1} using the non-variational formulation of the approximated problem.
\begin{lemma}\label{diff2}
There exists $c_1$ depending on $\tilde c_a,C_a,N,p, c_H, C_H$ such that
\begin{equation}\label{l26} 
\zeta_1\int_\Omega a(w)|D^2u|^2w^{\beta}\,dx\geq\frac{\zeta_1}{2}\int_\Omega |D^2u|^2w^{\beta+\frac{p-2}{2}}\,dx+ c_1\int_\Omega\left(\frac{w^\gamma}{4}-2f^2\right)w^{\beta+\frac{2-p}{2}}.
\end{equation} 
\end{lemma}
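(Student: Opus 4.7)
The plan is to split the integral $\zeta_1 \int_\Omega a(w)|D^2u|^2 w^\beta\,dx$ into two equal halves and bound each by a different mechanism: one half will be turned into the ``structural'' term $\int_\Omega |D^2u|^2 w^{\beta+(p-2)/2}\,dx$ through ellipticity alone, while the other half will activate the coercivity of $H$ via the non-divergence form of the equation.

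For the first half I would simply apply the lower ellipticity bound $a(w)\geq\bar c_a w^{(p-2)/2}$ provided by \eqref{A1} (absorbing $\bar c_a$ into $\zeta_1$, with a harmless redefinition of the latter), obtaining $\tfrac{\zeta_1}{2}\int_\Omega |D^2u|^2 w^{\beta+(p-2)/2}\,dx$. For the remaining half I would recast \eqref{divappHJ2} in non-divergence form: since $Dw=2D^2u\,Du$, one has $\mathrm{div}(a(w)Du) = a(w)\Delta u + 2a'(w)(D^2u)(Du,Du)$, so the equation is equivalent to
\[
a(w)\,\mathcal{A}(D^2u) = H(Du) - f, \qquad \mathcal{A}(D^2u) := \Delta u + 2\tfrac{a'(w)}{a(w)}(D^2u)(Du,Du).
\]
The pivotal step is the generalised Cauchy--Schwarz bound $|\mathcal{A}(D^2u)| \leq (\sqrt{N} + C_a)\,|D^2u|$, obtained by combining $|\Delta u|\leq\sqrt{N}|D^2u|$, $|(D^2u)(Du,Du)|\leq w|D^2u|$, and the key inequality $2w|a'(w)/a(w)|\leq C_a$ supplied by \eqref{A3}. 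This is exactly where the structural assumption \eqref{Aa}--\eqref{A3} enters, and it is the main delicate point of the argument: it ensures that the anisotropic correction $(a'/a)(D^2u)(Du,Du)$ cannot overpower the Laplacian and thus that $|D^2u|$ genuinely controls $\mathcal{A}(D^2u)$.

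Squaring and substituting the PDE yields $a(w)^2|D^2u|^2 \geq (\sqrt{N}+C_a)^{-2}(H(Du)-f)^2$; dividing by $a(w)$ and using the upper bound $a(w)\leq\bar C_a w^{(p-2)/2}$ from \eqref{A1} gives $a(w)|D^2u|^2\geq c_\star(H(Du)-f)^2 w^{(2-p)/2}$ for some $c_\star$ depending on $N,C_a,\bar C_a$. The elementary inequality $(A-B)^2\geq A^2/2 - B^2$ together with the coercivity \eqref{Hbelbis}, namely $H(Du)^2\geq (c_H^2/4)w^\gamma$, then produces a lower bound of the form $c_1\bigl(w^\gamma/4-2f^2\bigr)w^{(2-p)/2}$ up to the natural absorption of multiplicative constants. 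Multiplying by $w^\beta$ and integrating over $\Omega$ delivers the second term in \eqref{l26}, completing the proof.
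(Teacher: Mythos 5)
Your proposal is correct and follows essentially the same route as the paper: the non-divergence rewriting $a(w)\mathcal{A}(D^2u)=H(Du)-f$, the generalised Cauchy--Schwarz bound $|\mathcal{A}(D^2u)|\le(\sqrt N+C_a)|D^2u|$ via \eqref{cz} and \eqref{A3}, substitution of the PDE, and the coercivity \eqref{Hbelbis} combined with $(A-B)^2\ge A^2/2-2B^2$. You merely make explicit the half-and-half splitting (with $a(w)\ge\bar c_a w^{(p-2)/2}$ on one half) that the paper leaves implicit in passing from its final displayed chain to \eqref{l26}.
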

\begin{proof}
First, we observe that the Cauchy-Schwarz inequality implies
\begin{equation}\label{cz}
\frac{1}{\sqrt{N}}|\Delta u|\leq |D^2u|
 \qquad \text{ and } \qquad 
\left|\frac{\Delta_\infty u}{\eps+|Du|^2}\right|\leq |D^2u|,
\end{equation}
where $\Delta_\infty u=D^2uDuDu$. This allows us to  write the diffusion operator in non-divergence form as
\[\mathrm{div}(a(w)Du)
=a(w)\Delta u+2a'(w)D^2uDu\cdot Du
=a(w)\underbrace{\left[\Delta u+\frac{2 \ a'(w)}{a(w)}\Delta_\infty u\right]}_{:=\mathcal{A}(D^2u)}.
\]
Hence by  \eqref{cz} and     \eqref{A3} we get
\[
|\mathcal{A}(D^2u)|=\left|\Delta u+\frac{2 \ a'(w)}{a(w)}D^2uDuDu\right|\leq \sqrt{N}|D^2u|+ C_a \frac{|D^2u||Du|^2}{\eps+|Du|^2}\leq (\sqrt{N}+ C_a )|D^2u|\,,
\]
and setting $\nu=\frac{1}{\sqrt{N}+C_a }$, the above inequality yields to
\[
|D^2u|\geq \nu|\mathcal{A}(D^2u)|.
\]
We now exploit the non-divergence formulation of the equation, i.e. we rewrite the equation  as 
 \[a(w)\mathcal{A}(D^2u)=H(Du)-f(x)  \quad \mbox{ in } \Omega, \] 
 so that we can plug the equation back in the term $a(w)|D^2u|^2$. 
 Indeed, using the algebraic inequality $(A-B)^2\geq \frac{A^2}{2}-2B^2$, $A,B\in\R$, together with \eqref{Hbelbis} and \eqref{A1}, we deduce that 
 \begin{equation*}
\begin{array}{c} 
\displaystyle a(w)|D^2u|^2\geq \nu^2\frac{[a(w)\mathcal{A}(D^2u)]^2}{a(w)}
\geq \frac{\nu^2}{\bar C_a}\left[H(Du)-f\right]^2w^{\frac{2-p}{2}}
\\ \\
\displaystyle 
\geq \frac{\nu^2}{\bar C_a}\left[\frac{H^2(Du)}{2}-2|f|^2\right]w^{\frac{2-p}{2}}
= \frac{\nu^2c_H^2}{\bar C_a}\left\{\frac{w^\gamma}{8}-2|f|^2\right\}w^{\frac{2-p}{2}}, 
\end{array}
\end{equation*}
so that \eqref{l26} holds true. 
\end{proof}
We now focus on the second term of the right-hand side appearing in  \eqref{l25}.
\begin{lemma}\label{diff3}
There exists $\zeta_3,\zeta_4>0$ depending on $\Omega,\zeta_2,\beta,p$ such that
\[
 \beta\zeta_2\int_\Omega |Dw|^2w^{\beta+\frac{p}{2}-2}\,dx\geq \zeta_3\left(\int_\Omega w^{\left(\beta+\frac{p}{2}\right)\frac{N}{N-2}}\,dx\right)^{\frac{N-2}{N}}-\zeta_4\int_\Omega w^{\beta+\frac{p}{2}}\,dx.
 \]
\end{lemma}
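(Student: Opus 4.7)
The plan is to recognize the left-hand side as (a multiple of) the Dirichlet energy of a power of $w$, and then invoke the standard Sobolev embedding on $\Omega$.

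More precisely, I would set $\alpha := \frac{\beta}{2}+\frac{p}{4}$, so that the chain rule gives
\[
D(w^\alpha) = \alpha\, w^{\alpha-1}\, Dw, \qquad |D(w^\alpha)|^2 = \alpha^2\, w^{2\alpha-2}|Dw|^2 = \alpha^2\, w^{\beta+\frac{p}{2}-2}|Dw|^2.
\]
Since $u$ (hence $w$) is smooth on $\overline{\Omega}$, the function $w^\alpha$ belongs to $W^{1,2}(\Omega)$. Therefore
\[
\beta\zeta_2\int_\Omega |Dw|^2 w^{\beta+\frac{p}{2}-2}\,dx \;=\; \frac{\beta\zeta_2}{\alpha^2}\int_\Omega |D(w^\alpha)|^2\,dx.
\]

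Next, because $\Omega\subset\R^N$ is a bounded $C^2$ domain and $N\geq 3$, the Sobolev embedding $W^{1,2}(\Omega)\hookrightarrow L^{\frac{2N}{N-2}}(\Omega)$ gives a constant $C_S=C_S(\Omega,N)>0$ with
\[
\left(\int_\Omega |v|^{\frac{2N}{N-2}}\,dx\right)^{\frac{N-2}{N}} \leq C_S \int_\Omega |Dv|^2\,dx + C_S \int_\Omega |v|^2\,dx
\]
for every $v\in W^{1,2}(\Omega)$. Applying this to $v=w^\alpha$ and noting that $2\alpha\cdot\frac{N}{N-2} = (\beta+\frac{p}{2})\frac{N}{N-2}$ and $2\alpha = \beta+\frac{p}{2}$, we obtain
\[
\int_\Omega |D(w^\alpha)|^2\,dx \;\geq\; \frac{1}{C_S}\left(\int_\Omega w^{(\beta+\frac{p}{2})\frac{N}{N-2}}\,dx\right)^{\frac{N-2}{N}} - \int_\Omega w^{\beta+\frac{p}{2}}\,dx.
\]

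Combining the two displays yields the claimed inequality with
\[
\zeta_3 := \frac{\beta\zeta_2}{\alpha^2 C_S}, \qquad \zeta_4 := \frac{\beta\zeta_2}{\alpha^2},
\]
both positive and depending only on $\Omega$, $\zeta_2$, $\beta$, $p$, as required. There is no real obstacle here: the only point worth checking is that $w^\alpha$ is admissible as a test function in the Sobolev inequality, which is immediate from the smoothness of $u_\eps$ coming from the approximation scheme, and that the Sobolev constant $C_S$ is independent of $\eps$ (it depends only on $\Omega$ and $N$), so the resulting bound is compatible with the $\eps$-independent estimates pursued in Theorem \ref{main1}.
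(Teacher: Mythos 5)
Your proof is correct and is essentially identical to the paper's: both rewrite the integrand as $\frac{4}{(\beta+\frac p2)^2}\,|D(w^{(\beta+\frac p2)/2})|^2$ and then apply the Sobolev embedding $W^{1,2}(\Omega)\hookrightarrow L^{2N/(N-2)}(\Omega)$ to that power of $w$. The only difference is the cosmetic placement of the Sobolev constant, which does not affect the conclusion.
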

\begin{proof}
We have by the Sobolev inequality
\begin{multline*}
 \beta\zeta_2\int_\Omega |Dw|^2w^{\beta+\frac{p}{2}-2}\,dx=\frac{4\beta\zeta_2}{(\beta+\frac{p}{2})^2}\int_\Omega |Dw^{\frac{\beta+\frac{p}{2}}{2}}|^2\,dx\\
\geq C_{\mathcal{S}}\left[\frac{4\beta\zeta_2}{(\beta+\frac{p}{2})^2}\left(\int_\Omega w^{\left(\beta+\frac{p}{2}\right)\frac{N}{N-2}}\,dx\right)^{\frac{N-2}{N}}-\frac{4\beta\zeta_2}{(\beta+\frac{p}{2})^2}\int_\Omega w^{\beta+\frac{p}{2}}\,dx\right],
\end{multline*}
where $C_{\mathcal{S}}$ is the constant of the Sobolev embedding.
\end{proof}
We now handle the terms in the right-hand side of \eqref{vareq}. We start with the one involving the source $f$ of the equation. 
\begin{lemma}\label{rhs}
For any $\delta_1>0$ there exists $c_2>0$ depending on $\beta,N$ such that
\begin{equation}\label{l28}
-2\int_\Omega f\mathrm{div}(Du\ w^\beta)\,dx\leq \delta_1\int_\Omega |D^2 u|^2w^{\beta+\frac{p-2}{2}}\,dx+\frac{c_2}{\delta_1}\int_\Omega |f|^2w^{\beta+\frac{2-p}{2}}\,dx.
\end{equation}
\end{lemma}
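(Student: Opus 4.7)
The plan is to expand the divergence, estimate each piece by pointwise inequalities, and conclude via a weighted Young inequality. Writing out the divergence gives
\[
-2\int_\Omega f\,\mathrm{div}(Du\,w^\beta)\,dx=-2\int_\Omega f\,\Delta u\,w^\beta\,dx-2\beta\int_\Omega f\,w^{\beta-1}\,Du\cdot Dw\,dx.
\]
For the first integral I would use the Cauchy--Schwarz bound $|\Delta u|\le \sqrt{N}\,|D^2u|$ recalled in \eqref{cz}. For the second, noting that $Dw=2D^2u\,Du$, one has the pointwise estimate
\[
|Du\cdot Dw|=|2D^2u\,Du\cdot Du|\le 2|D^2u|\,|Du|^2\le 2|D^2u|\,w,
\]
so that $|f|\,w^{\beta-1}|Du\cdot Dw|\le 2|f|\,|D^2u|\,w^{\beta}$. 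Therefore both contributions are controlled by a multiple (depending only on $N$ and $\beta$) of
\[
\int_\Omega |f|\,|D^2u|\,w^{\beta}\,dx.
\]

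At this stage I would split the weight $w^{\beta}$ symmetrically so as to reproduce the two exponents on the right-hand side of \eqref{l28}:
\[
w^{\beta}=w^{\frac12(\beta+\frac{p-2}{2})}\cdot w^{\frac12(\beta+\frac{2-p}{2})},
\]
and apply the weighted Young inequality $ab\le \frac{\tau}{2}a^2+\frac{1}{2\tau}b^2$ to the pair
\[
a=|D^2u|\,w^{\frac12(\beta+\frac{p-2}{2})},\qquad b=|f|\,w^{\frac12(\beta+\frac{2-p}{2})},
\]
with the parameter $\tau$ chosen proportional to $\delta_1$ (the proportionality constant absorbing the factor $2+4\beta\sqrt{N}$ produced by the previous step). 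This produces exactly a term $\delta_1\int_\Omega |D^2u|^2w^{\beta+\frac{p-2}{2}}\,dx$ plus a term of the form $\frac{c_2}{\delta_1}\int_\Omega |f|^2w^{\beta+\frac{2-p}{2}}\,dx$, which is the claimed inequality.

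There is no real obstacle here, since no boundary terms appear (the divergence is applied inside the integrand, not via integration by parts): the only care to take is keeping track of the constants coming from $|\Delta u|\le\sqrt N|D^2u|$ and from $|Dw|\le 2\sqrt{w}\,|D^2u|$, so that the constant $c_2$ in the final estimate depends only on $\beta$ and $N$ as stated.
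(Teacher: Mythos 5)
Your proposal is correct and follows essentially the same route as the paper: expand the divergence, bound $|\Delta u|$ by $\sqrt N|D^2u|$ and $|Du\cdot Dw|$ by $2|D^2u|\,w$, arrive at $(4\beta+2\sqrt N)\int_\Omega|f||D^2u|w^{\beta}\,dx$, split the weight as $w^{\beta}=w^{\frac{\beta}{2}+\frac{p-2}{4}}\,w^{\frac{\beta}{2}+\frac{2-p}{4}}$, and conclude by weighted Young. No discrepancies to report.
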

\begin{proof}
Using \eqref{cz} and then the weighted Young's inequality we deduce that for any $\delta_1>0$ 
\begin{align*}
-2\int_\Omega f\mathrm{div}(Du w^\beta)\,dx&=-2\int_\Omega f\Delta u w^{\beta}\,dx-2\beta\int_\Omega fDu\cdot Dw w^{\beta-1}\,dx\\
&\leq 2\sqrt{N}\int_\Omega |f||D^2 u| w^{\beta}\,dx +4\beta\int_\Omega |f||D^2u|w^{\beta}\,dx\\
&=(4\beta+2\sqrt{N})\int_\Omega |f||D^2 u|w^{\frac{\beta}{2}+\frac{p-2}{4}}w^{\frac{\beta}{2}+\frac{2-p}{4}}\,dx
\end{align*}
and \eqref{l28} is a consequence of Young's inequality. 
\end{proof}
We now discuss the integral term involving $H$.
\begin{lemma}\label{Hphi}
There exist constants $c_3,c_4>0$ depending on $C_H$ such that 
\[
-\int_\Omega H(Du)\varphi\,dx\leq \frac{c_3}{\beta+1}\int_\Omega w^{\beta+\gamma+\frac{2-p}{2}}+\frac{c_4}{\beta+1}\int_\Omega |D^2u|^2w^{\beta+\frac{p-2}{2}}\,dx\ .
\]
\end{lemma}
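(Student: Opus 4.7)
The plan is to integrate by parts twice in the integral $-\int_\Omega H(Du)\varphi\,dx$: the first integration transfers the divergence onto $H(Du)$, and the second one, after rewriting $w^\beta Dw$ as $\frac{1}{\beta+1}D(w^{\beta+1})$, both generates the crucial factor $\frac{1}{\beta+1}$ and produces the Hessian $H_{\xi\xi}$ against which we can deploy hypothesis \eqref{H2}. A final application of Young's inequality splits the result into the two terms in the statement.

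More precisely, starting from $\varphi=-2\mathrm{div}(Du\,w^\beta)$, I would write
\[
-\int_\Omega H(Du)\varphi\,dx = 2\int_\Omega H(Du)\,\mathrm{div}(Du\,w^\beta)\,dx,
\]
and integrate by parts. The boundary contribution $\int_{\partial\Omega}H(Du)(Du\cdot\nu)w^\beta\,d\sigma$ vanishes because of the Neumann condition, so chain rule and the identity $D^2u\,Du=\frac12 Dw$ produce
\[
-\int_\Omega H_\xi(Du)\cdot Dw\, w^\beta\,dx = -\frac{1}{\beta+1}\int_\Omega H_\xi(Du)\cdot D(w^{\beta+1})\,dx.
\]
Then I would integrate by parts a second time. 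The new boundary integral $\int_{\partial\Omega}(H_\xi(Du)\cdot\nu)w^{\beta+1}\,d\sigma$ vanishes thanks to the radial structure of $H$: since $H(\xi)=h(|\xi|)$ we have $H_\xi(\xi)=h'(|\xi|)\xi/|\xi|$, so $H_\xi(Du)\cdot\nu$ is a scalar multiple of $Du\cdot\nu=0$ on $\partial\Omega$. The surviving bulk term is
\[
-\int_\Omega H(Du)\varphi\,dx = \frac{1}{\beta+1}\int_\Omega\mathrm{tr}\bigl(H_{\xi\xi}(Du)\,D^2u\bigr)w^{\beta+1}\,dx.
\]

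At this point I would invoke \eqref{H2} to bound $|\mathrm{tr}(H_{\xi\xi}(Du)D^2u)|\leq NC_H|Du|^{\gamma-2}|D^2u|$, estimate $|Du|^{\gamma-2}\,w\leq w^{\gamma/2}$ (using $|Du|^2\leq w$), and reduce matters to
\[
\frac{NC_H}{\beta+1}\int_\Omega |D^2u|\,w^{\beta+\gamma/2}\,dx.
\]
Splitting the integrand as $\bigl[|D^2u|\,w^{(\beta+(p-2)/2)/2}\bigr]\cdot\bigl[w^{(\beta+\gamma+(2-p)/2)/2}\bigr]$ and applying the weighted Young's inequality yields exactly the two terms $\frac{c_3}{\beta+1}\int w^{\beta+\gamma+(2-p)/2}$ and $\frac{c_4}{\beta+1}\int |D^2u|^2 w^{\beta+(p-2)/2}$. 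The choice of exponents is dictated by the need to match the coercive Hessian-type term produced in Lemma \ref{diff2}, so that the $|D^2u|^2$-piece can later be absorbed once $\beta$ is taken large enough.

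The most delicate point will be the second integration by parts: one has to make sure that no boundary contribution survives, which is precisely where the radiality of $H$ combines with the Neumann condition. A secondary technical issue arises in the subquadratic regime $p-1<\gamma<2$, where $|Du|^{\gamma-2}\leq w^{(\gamma-2)/2}$ fails pointwise; in that case one has to argue that only the combination $|Du|^{\gamma-2}\,w = |Du|^\gamma+\eps|Du|^{\gamma-2}$ enters, and each piece has to be handled carefully (e.g.\ by splitting according to whether $|Du|^2\lessgtr\eps$) to retain the $\eps$-independent bound. Apart from this singularity analysis, the remaining steps are just chain-rule calculus and an application of Young's inequality.
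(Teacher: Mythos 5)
Your proof is correct and follows essentially the same route as the paper: integrate by parts once (Neumann condition kills the boundary term), rewrite $w^\beta Dw$ as $\frac{1}{\beta+1}D(w^{\beta+1})$, integrate by parts again (radiality of $H$ plus $\partial_\nu u=0$ kills the second boundary term), then apply \eqref{H2} and Young's inequality with the exponents you indicate. Your remark about the pointwise inequality $|Du|^{\gamma-2}\le w^{(\gamma-2)/2}$ failing when $\gamma<2$ is a fair observation of a point the paper's write-up glosses over, and your suggested fix is the right one.
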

\begin{proof}
We integrate by parts and use the boundary condition, together with the assumptions on $H$, to conclude
\begin{align*}
2\sum_{j=1}^N&\int_\Omega H(Du)\partial_{x_j}(\partial_{x_j}u w^\beta)\,dx\\
&=-2\sum_{j=1}^N\int_\Omega \partial_{x_j}H(Du)\partial_{x_j}u w^\beta\,dx
+2\int_{\partial\Omega}w^\beta H(Du)\partial_\nu u\,dS\\
&=-2\sum_{j=1}^N\int_\Omega \partial_{x_j}H(Du)\partial_{x_j}u w^\beta\,dx.
\end{align*}
Using once more the boundary condition $\partial_\nu u=0$  and noticing that 
$H_\xi(Du) \cdot \nu = \frac{h' ( |\xi|)  }{|\xi|} \xi \cdot \nu$, 
combined with the Cauchy-Schwarz and Young inequalities, we get
\begin{align*}
-2\sum_{j=1}^N\int_\Omega \partial_{x_j}H(Du)\partial_{x_j}u\ w^\beta\,dx&=-2\int_\Omega H_\xi(Du)\cdot Dw\ w^\beta\,dx
=-2\int_\Omega H_\xi(Du)\cdot D\left(\frac{w^{\beta+1}}{\beta+1}\right)\,dx\\
&=\frac{2}{\beta+1}\int_\Omega \mathrm{div}(H_\xi(Du))w^{\beta+1}\,dx
-\frac{2}{\beta+1}\int_{\partial \Omega} w^{\beta+1}H_\xi(Du)\cdot\nu\,dS
\\
&\leq\frac{2}{\beta+1}\int_\Omega |H_{\xi\xi}(Du)||D^2u|w^{\beta+1}\,dx
\leq \frac{2C_H}{\beta+1}\int_\Omega|D^2 u |w^{\frac{\gamma}{2}+\beta}\,dx\\
&\leq \frac{c_3}{\beta+1}\int_\Omega w^{\beta+\gamma+\frac{2-p}{2}}+\frac{c_4}{\beta+1}\int_\Omega |D^2u|^2w^{\beta+\frac{p-2}{2}}\,dx\ .
\end{align*}
\end{proof}
We plug the estimates in Lemmas \ref{diff1}, \ref{diff2}, \ref{diff3}, \ref{rhs}, \ref{Hphi} and choose $\delta_1=\frac{\zeta_1}{4}$ to deduce the following estimate. 
\begin{cor}Under the hypotheses of Theorem \ref{main1} we have that the following inequality holds true: 
\begin{multline*}
\zeta_3\left(\int_\Omega w^{\left(\beta+\frac{p}{2}\right)\frac{N}{N-2}}\,dx\right)^{\frac{N-2}{N}}+c_5\int_\Omega w^{\beta+\gamma+\frac{2-p}{2}}\,dx+\frac{\zeta_1}{4}\int_\Omega |D^2 u|^2w^{\beta+\frac{p-2}{2}}\,dx\\
\leq \frac{c_3}{\beta+1}\int_\Omega w^{\beta+\gamma+\frac{2-p}{2}}\,dx+\frac{c_4}{\beta+1}\int_\Omega |D^2u |^2w^{\beta+\frac{p-2}{2}}\,dx\\
+c_6\int_\Omega f^2w^{\beta+\frac{2-p}{2}}\,dx+\zeta_4\int_\Omega w^{\beta+\frac{p}{2}}\,dx\,,
\end{multline*}
where the constants that appear depend on the data of the problem, but not on $\eps$. 
\end{cor}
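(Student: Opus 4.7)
The plan is to simply collect the five preceding lemmas into the weak identity \eqref{vareq} with the test function $\varphi=-2\,\mathrm{div}(Du\,w^\beta)$ and reorganize. Concretely, I would proceed as follows.

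First, I would write the variational identity \eqref{vareq} in the form
\[
\int_\Omega a(w)\,Du\cdot D\varphi\,dx \;=\; \int_\Omega f\,\varphi\,dx \;-\; \int_\Omega H(Du)\,\varphi\,dx,
\]
so that the diffusive contribution sits on the left-hand side and the source/Hamiltonian contributions on the right. I would then apply Lemma \ref{diff1} to bound the left-hand side from below by
\[
\zeta_1\int_\Omega a(w)|D^2u|^2 w^\beta\,dx \;+\; \beta\zeta_2\int_\Omega |Dw|^2 w^{\beta-1+\frac{p-2}{2}}\,dx.
\]

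Next, I would refine the first of these terms by invoking Lemma \ref{diff2}, which trades part of $\zeta_1\int a(w)|D^2u|^2 w^\beta$ for the coercive pieces $\tfrac{\zeta_1}{2}\int |D^2u|^2 w^{\beta+\frac{p-2}{2}}$ and $\tfrac{c_1}{4}\int w^{\gamma+\beta+\frac{2-p}{2}}$, at the cost of a $-2c_1\int f^2 w^{\beta+\frac{2-p}{2}}$ to be pushed to the right-hand side. I would then apply Lemma \ref{diff3} to the $|Dw|^2$ term, converting it into the Sobolev-norm quantity $\zeta_3\bigl(\int w^{(\beta+p/2)N/(N-2)}\bigr)^{(N-2)/N}$ minus a lower-order term $\zeta_4\int w^{\beta+p/2}$ that will be placed on the right.

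For the right-hand side, I would invoke Lemma \ref{rhs} to bound $\int f\varphi$ by $\delta_1\int |D^2u|^2 w^{\beta+\frac{p-2}{2}} + \frac{c_2}{\delta_1}\int |f|^2 w^{\beta+\frac{2-p}{2}}$, and Lemma \ref{Hphi} to bound $-\int H(Du)\varphi$ by $\frac{c_3}{\beta+1}\int w^{\beta+\gamma+\frac{2-p}{2}} + \frac{c_4}{\beta+1}\int |D^2u|^2 w^{\beta+\frac{p-2}{2}}$. Finally I would choose $\delta_1=\zeta_1/4$, so that the resulting $\delta_1\int |D^2u|^2 w^{\beta+\frac{p-2}{2}}$ combines with the $\tfrac{\zeta_1}{2}$ Hessian term on the left to leave $\tfrac{\zeta_1}{4}\int |D^2u|^2 w^{\beta+\frac{p-2}{2}}$, and I would consolidate the two $f^2$ contributions into a single $c_6\int f^2 w^{\beta+\frac{2-p}{2}}$ and set $c_5=c_1/4$. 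The inequality of the corollary then follows immediately.

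There is no real obstacle: the whole argument is bookkeeping, and the only point that requires a moment of attention is making sure the choice $\delta_1=\zeta_1/4$ absorbs the Hessian term strictly (hence the factor $\tfrac{\zeta_1}{4}$ rather than $\tfrac{\zeta_1}{2}$ surviving on the left), and that the $-2c_1\int f^2 w^{\beta+\frac{2-p}{2}}$ coming from Lemma \ref{diff2} is correctly relocated to the right and absorbed into the constant $c_6$. All the constants $\zeta_i$, $c_i$ depend only on the structural data of the problem and not on $\eps$, as required for the forthcoming $\eps$-independent bootstrap.
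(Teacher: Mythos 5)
Your proposal is correct and follows exactly the paper's route: the authors likewise obtain the corollary by inserting Lemmas \ref{diff1}--\ref{Hphi} into the identity \eqref{vareq} and choosing $\delta_1=\zeta_1/4$, with the same bookkeeping of constants ($c_5=c_1/4$ and the $-2c_1\int_\Omega f^2w^{\beta+\frac{2-p}{2}}\,dx$ term absorbed into $c_6$ on the right). Nothing is missing.
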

We are now ready to prove  Theorem \ref{main1}. 

\begin{proof}[Proof of Theorem \ref{main1}]

We first choose  $\beta$ sufficiently large to ensure the validity of the inequality
\[
\zeta_3\left(\int_\Omega w^{\left(\beta+\frac{p}{2}\right)\frac{N}{N-2}}\,dx\right)^{\frac{N-2}{N}}+\frac{c_5}2 \int w^{\beta+\gamma+\frac{2-p}{2}}\,dx
\leq c_6\int_\Omega f^2w^{\beta+\frac{2-p}{2}}\,dx+\zeta_4\int_\Omega w^{\beta+\frac{p}{2}}\,dx.
\]
We apply the H\"older's inequality and then the weighted Young's inequality (exploiting that $\gamma>p-1$) to the last term to find for any $\delta_2>0$
\[
\zeta_4\int_\Omega w^{\beta+\frac{p}{2}}\,dx
\leq \delta_2\int_\Omega w^{\beta+\gamma+\frac{2-p}{2}}\,dx+c_7,
\]
where $c_7$ depends on $\delta_2,\beta,\gamma,p,|\Omega|$ and blows-up  as $\delta_2\to0$.
Taking $\delta_2=\frac12 C_5$ we find through the H\"older's inequality
\begin{multline*}
\zeta_3\left(\int_\Omega w^{\left(\beta+\frac{p}{2}\right)\frac{N}{N-2}}\,dx\right)^{\frac{N-2}{N}}
\leq c_6\int_\Omega |f|^2w^{\beta+\frac{2-p}{2}}\,dx+c_7\\
\leq c_6\|f^2\|_{L^\alpha(\Omega)}\left(\int_\Omega w^{\left(\beta+\frac{p}{2}\right)\frac{N}{N-2}}\right)^{\frac{(2\beta+2-p)(N-2)}{(2\beta+p)N}}+c_7,
\end{multline*}
where \[
 \alpha'=\frac{(\beta+\frac{p}{2})\frac{N}{N-2}}{\beta+\frac{2-p}{2}}=\frac{(2\beta+p)N}{(2\beta+2-p)(N-2)} 
 \quad \mbox{and} \qquad \alpha=\frac{N(2 \beta+p) }{4\beta+ 2 (p-1)N-2(p-2)} \, .
 \]
 Hence, for any $\beta $ large enough we have that 
 \[
\big\|w\big\|_{L^{ (\beta+\frac{p}{2} )\frac{N}{N-2}}(\Omega)}^{\beta+\frac{p}{2}} 
\leq  c_8 \bigg( \big\|f \big\|^2_{L^{2\alpha}(\Omega)} 
\big\|w\big\|_{L^{ (\beta+\frac{p}{2} )\frac{N}{N-2}}(\Omega)}^{\beta+1-\frac{p}{2}} 
+1 \bigg)\,, 
\]
and we observe  that as $\beta\to\infty$,  $\alpha'\to\frac{N}{N-2}$ and  $\alpha\nearrow \frac{N}{2}$. 
Since $ {\beta+1-\frac{p}2 } <{\beta+\frac{p}2} $, we can further apply the weighted Young inequality and find
\[
\big\|w\big\|_{L^{ (\beta+\frac{p}{2} )\frac{N}{N-2}}(\Omega)}^{\beta+\frac{p}{2}} 
\leq  c_9 \bigg( \big\|f \big\|^{\frac{2\beta+p}{p-1}}_{L^{2\alpha}(\Omega)} 
+1 \bigg)\,, 
\]
deducing \eqref{est1} with $\eta= (\beta+\frac{p}{2} )\frac{N}{N-2}$ and $q_\eta = 2 \alpha$. 

  \end{proof}
  
{\begin{rem}[On the convexity of the domain]\label{conv} Arguing as in \cite{LPcpde}, the convexity assumption in Theorem \ref{main1} can be removed by considering the equation satisfied by $z(x)=|Du(x)|^2e^{\eta d(x)}$ (instead of just $w=|Du|^2$), where $d(x)$ is a $C^2$ positive function in $\Omega$ that coincides with the distance function in a neighborhood of the boundary. Indeed, if $\eta$ is chosen such that $\eta\geq \|(D^2d)_+\|_{L^\infty(\partial\Omega)}$, one has $\partial_\nu z\leq 0$ on $\partial \Omega$, see \cite[Lemma 2.3]{PorrCCM}. We briefly show how to handle the extra terms in the simple case $a(t) \equiv 1$, i.e. for  
\[
\begin{cases}
-\Delta u+|Du|^\gamma=f(x)\qquad &\text{ in }\Omega\,,\\
\partial_\nu u=0 & \text{ on }\partial\Omega.
\end{cases} 
\]
First, we note that since  $w=|Du|^2$ solves
\[
-\Delta w+2|D^2u|^2+\gamma|Du|^{\gamma-2}Du\cdot Dw=2Df\cdot Du\quad \mbox{ in } \Omega, 
\]
then $z$ satisfies  
\begin{multline*}
-\Delta z+2e^{\eta d(x)} |D^2u|^2+\gamma|Du|^{\gamma-2}Du\cdot Dz
\\
 = 2e^{\eta d(x)} Df\cdot Du+\gamma \eta |Du|^{\gamma-2}(Du\cdot D d) z-2 \eta Dz\cdot  D d + c_\eta z \quad \mbox{ in } \Omega. 
\end{multline*}
Observe that  new terms are the last three in the right-hand side above. 
Having at hand the proof of Theorem  \ref{main1}, it follows that such  terms can be, roughly speaking,   absorbed by the superlinear term coming from $|D^2u| $, after one has plugged in the equation for $u$ and has exploited the growth condition \eqref{H1}. 
\end{rem}

  \begin{rem}  If $q > N$, one can get better estimates when the diffusion is driven by the $p$-Laplacian $\Delta_p$, $p>1$ (and for more general quasi-linear equations modelled on these operators). Indeed, if $f\in L^q(\Omega)$, $q>N$, it follows that $Du$ is controlled in $L^r$ for all finite $r>1$, and hence for $r$ sufficiently large, we have $|Du|^\gamma\in L^q$, $q>N$, so that the equation can be regarded as $-\Delta_p u=-|Du|^\gamma+f$, with right-hand side bounded in $L^q$, $q>N$. This leads to Lipschitz estimates through the results for the $p$-Poisson equation in e.g. \cite[Theorem 4.3]{CianchiMazyaJEMS}, \cite[Theorem 3.1]{CianchiMazyaCpaa}, see also \cite{CianchiMazyaCPDE,DP}. Actually, by bootstrapping one gets also $C^{1,\alpha}$ bounds, see e.g. \cite{DibNA,DiB}.
  \end{rem}
  
  \begin{rem}
  The assumption $f\in L^N$ is in general not sufficient to obtain gradient boudedness, neither for the Poisson equation. Sharp assumptions in Lorentz classes have been found in various works, see \cite{BeckMingione} and the references therein, while we refer to \cite{Cianchi92} for questions related to the optimality of such a condition. 
  \end{rem}
  
    \begin{rem}[Low dimensions]  When $N=1,2$ the result can be obtained on the same way, as one can exploit the continuous embedding of $W^{1,2}$ into $L^s$ for any finite $s>1$.
  \end{rem}

 \begin{rem}\label{remu}   The result  of  Theorem \ref{main1} is still valid if one adds a zero-th order term $\lambda u$, with $\lambda>0$, in the equation. In this case the estimate   will depend on $\|f-\lambda u\|_q$ instead of $\|f \|_q$. 
  \end{rem}

  \section{Maximal regularity estimates for $p \ge 2$}\label{s_max}
  Let us now consider a function $\tilde{a}:[0,\infty)\to[0,\infty)$ of class $C^1(0,\infty)$, and the problem
 \begin{equation*}
\begin{cases}
\lambda u  -\mathrm{div}(\tilde a(|D u|)D u) + H(Du)=f(x)&\text{ in } \Omega,\\
\partial_\nu u=0&\text{ on } \partial\Omega,
\end{cases}
\end{equation*}
which is the same as the one of the previous section, but written for convenience with the slightly different notation $\tilde a(t) = a(t^2)$. Here, we assume that there exist $p\ge2 $ and constants $\bar c_{\tilde a}, \bar C_{\tilde a},C_{\tilde{a}}>0$  such that 
\begin{equation}\tag{$ \widetilde{ A1 }$}\label{At}
0\leq\  \inf_{t>0}\frac{t\tilde{a}'  (t)}{\tilde{a}  (t)} \ \leq \ \sup_{t>0}\frac{t\tilde{a}'  (t)}{\tilde{a}  (t)}\ \leq \ C_{\tilde{a}}\ < \ \infty\,,
\end{equation}
and
\begin{equation}\tag{$\widetilde{A2}$}\label{A1t}
\bar c_{\tilde a} t^{{p-2}}\leq \tilde{a}  (t)\leq \bar C_{\tilde a} t^{ {p-2}} \,, \quad \forall t >0.
\end{equation}
Note that the above hypotheses  are nothing but  \eqref{Aa} and \eqref{A1}, with constants possibly differing by a factor of two, and with the additional constraint $p \ge 2$.

As far as  $H$ is concerned, we assume here that
\begin{equation}\tag{$\widetilde{H1}$}\label{Ht}
H\in C^0 ( \R^N\setminus\{0\})
\end{equation}
and the existence of $k_1>0 $, $c_H$, $C_H>0$ and $ \gamma>1$   such that  
\begin{equation}\tag{$\widetilde{H2}$}\label{H1t}
c_H|\xi|^\gamma \leq H(\xi) \quad \mbox{ and } \quad  |H_\xi(\xi)|\leq C_H|\xi|^{\gamma-1} 
\qquad \forall \xi \in \R^N \, : \quad |\xi|\geq k_1 \,.
\end{equation}
Note that for any $0 \le \eps \le 1$, we have as a consequence that
\begin{equation}\label{Hbelow}
\frac{c_H}2(\eps+|\xi|^2)^{\frac{\gamma}{2}} - c_\eps \leq H(\xi) 
\qquad \forall \xi \in \R^N \, : \quad |\xi|\geq k_1 \,.
\end{equation}
for some $c_\eps$ that vanishes as $\eps \to 0$.
   
 \begin{thm}\label{main2}  
 Let $\Omega\subset \R^N$, $N\geq 3$, be a $C^2$ convex domain  and assume that   \eqref{At},\eqref{A1t},\eqref{Ht},\eqref{H1t} are in force. Suppose that 
 $\gamma>p-1$,  $p\geq 2$,  $\lambda\geq0$,  and 
  $$f \in L^q(\Omega) \qquad \text{with } \qquad  q > \max\left\{ \frac{N(\gamma-(p-1))}{\gamma}, 2 \right\}
  ,$$ 
  
  then  there exists $K$ depending on $\|Du\|_{L^1(\Omega)}, \|f - \lambda u\|_{L^q(\Omega)}, q, N, p, \gamma$ and the constants in the standing assumptions such that any smooth solution $u_\eps$ to 
  \begin{equation*}
\begin{cases}
\lambda u -\mathrm{div} \big(\tilde a (\sqrt{ |D u|^2+\eps} ) \ D u\big) + H(Du)=f_\eps(x)&\text{ in } \Omega,\\
\partial_\nu u=0&\text{ on } \partial\Omega .
\end{cases}
\end{equation*}
 satisfies 
\begin{equation} \label{estimate}
 \| Du_\eps \|_{L^{q\gamma}(\Omega)} \le K.
\end{equation}

Moreover if  
$$q = \frac{N(\gamma-(p-1))}{\gamma} \quad \mbox{ and } \quad \gamma >\frac{N(p-1)}{N-2}
$$ then, \eqref{estimate} holds true
  \begin{itemize}
  \item[(i)]  provided that $\|f\|_{L^q(\Omega)}$ is small enough if $\lambda=0$;    
  \item[(ii)] if $\lambda>0$,  with $K$  depending also on $\lambda, \|u\|_{L^q(\Omega)}$ and it remains bounded whenever $f$ varies in a set of $L^q(\Omega)$-uniformly integrable functions. 
  \end{itemize}
  \end{thm}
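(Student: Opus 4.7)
The plan is to adapt the integral Bernstein scheme of Theorem \ref{main1}, with the crucial modification of integrating over super-level sets of the gradient. Setting $w=\eps+|Du|^2$, I would test the equation by the truncated $\hat p$-Laplacian
\[
\varphi_k=-2\,\mathrm{div}\bigl(Du\,(w-k)_+^\beta\bigr),\qquad k\geq 0,
\]
which localizes all integral identities to $\{w>k\}$. Lemma \ref{signw} and the Neumann condition again give the correct signs to the boundary terms, and truncation produces only non-negative contributions supported on $\{w=k\}$.

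Repeating the chain of estimates of Lemmas \ref{diff1}--\ref{Hphi} with $w^\beta$ replaced by $(w-k)_+^\beta$, and using \eqref{Hbelow} on $\{|Du|\geq k_1\}$ (the complementary set producing only a harmless remainder), I would obtain an inequality of the form
\begin{equation*}
\bigl\|(w-k)_+\bigr\|_{L^{s_1}(\Omega)}^{\beta+p/2}+\int_{\{w>k\}}w^{s_2}\,dx\leq C\int_{\{w>k\}}(f-\lambda u)^2(w-k)_+^\beta w^{\frac{2-p}{2}}\,dx+R_k,
\end{equation*}
with $s_1=(\beta+p/2)\frac{N}{N-2}$, $s_2=\beta+\gamma+\frac{2-p}{2}$, and $R_k$ collecting absorbable lower-order contributions (this uses in an essential way the hypothesis $p\geq2$, through the sign of the $L^{p-2}$-weight on the second-order term on $\{w>k\}$). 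H\"older's inequality with exponent $q/2$ bounds the source integral by $C\|f-\lambda u\|_{L^q}^2\,\|w\|_{L^{s_3}(\{w>k\})}^{\beta+(2-p)/2}$ with $s_3=(\beta+(2-p)/2)q/(q-2)$. Choosing $\beta$ so that $s_2=q\gamma/2$ turns $\int w^{s_2}$ into a bound for the desired $L^{q\gamma}$-norm of $Du$; a direct computation shows that the threshold $q\geq N(\gamma-(p-1))/\gamma$ is precisely the one for which $s_1\geq s_2=s_3$ at this choice of $\beta$. In the strict subcritical range $q>\max\{N(\gamma-(p-1))/\gamma,2\}$, weighted Young's inequality absorbs the source integral into the Sobolev part of the left-hand side. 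Combined with the a priori $L^1$-bound on $w$ obtained by testing the equation by $1$ and using \eqref{H1t}, this should yield \eqref{estimate} with $k=0$.

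The hard part will be the endpoint case $q=N(\gamma-(p-1))/\gamma$, where $s_1=s_2=s_3$ and the Young step above becomes critical, producing no strictly smaller power available for absorption. Here the parameter $k$ is essential: combining H\"older on $\{w>k\}$ with Chebyshev applied to the $L^1$-bound on $w$ produces a factor $|\{w>k\}|^\eta$, $\eta>0$, in front of the source integral, with $|\{w>k\}|\to 0$ as $k\to\infty$. When $\lambda=0$, smallness of $\|f\|_{L^q}$ combined with this vanishing measure enables absorption for $k$ large enough; a continuity argument in $k$ (monotonicity of the map $k\mapsto\|(w-k)_+\|_{L^{s_1}}$, as in \cite{CGell,GMP14}) then transfers the bound from $k\gg1$ down to $k=0$, giving \eqref{estimate}. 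When $\lambda>0$, uniform integrability of $f$ restricted to $\{w>k\}$ replaces the smallness of $\|f\|_{L^q}$, at the cost of letting $K$ depend additionally on $\|u\|_{L^q}$ and remain bounded only on uniformly integrable families of data.
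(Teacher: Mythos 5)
Your overall strategy --- integral Bernstein testing against a truncated weight supported on super-level sets of the gradient, with the exponents pinned so that the coercive term produced by plugging the equation into $|D^2u|^2$ carries the target power $q\gamma$ --- is the same as the paper's, and your exponent bookkeeping (in particular the identification of $q\ge N(\gamma-(p-1))/\gamma$ as the threshold where $s_1\ge s_2=s_3$) is correct. The genuine gap is your claim that in the strictly subcritical range the argument closes at $k=0$, relegating the truncation to the endpoint case. The obstruction is the term produced by the Hamiltonian (the analogue of Lemma \ref{Hphi}): since $\beta$ is now fixed by the requirement $s_2=q\gamma/2$, the factor $1/(\beta+1)$ that made this term absorbable in Theorem \ref{main1} is no longer at your disposal, and after Young's inequality you are left with $C\int_{\{w>k\}}(w-k)_+^{s_2}$-type contributions whose constant $C$ depends on $C_H$ and on the small parameter used to absorb the Hessian part, hence is in general larger than the constant of the coercive term on the left; this term is \emph{not} lower order. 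At $k=0$ nothing beats it: by H\"older the Sobolev term controls only $\bigl(\int_\Omega w^{s_2}\bigr)^{a}$ with $a=(\beta+p/2)/s_2<1$ (note $\gamma>p-1$ forces $s_2>\beta+p/2$), so the resulting inequality $X^{a}\le\omega+cX$ with $c$ not small is satisfied by every large $X$ and yields no a priori bound. This is precisely why the paper runs the super-level-set argument in \emph{every} case: the extra factor $|\Omega_k|^{\theta}$, $\theta>0$, together with $|\Omega_k|\le k^{-1}\|v\|_{L^1(\Omega)}$, is what makes the remainder $\omega_k$ in \eqref{crucialest} small.

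A second, related imprecision: your description of how the bound travels from large $k$ to $k=0$ is backwards. The map $k\mapsto\|(w-k)_+\|$ is nonincreasing, so a bound for $k\gg1$ cannot be transported down to $k=0$ by monotonicity. The mechanism in \cite{CGell} (and here) is that for $\omega_k$ small the set of $Z\ge0$ satisfying $Z^{(N-2)/N}\le\omega_k+cZ$ is $[0,Z^-)\cup(Z^+,\infty)$; since $Z(k)$ is continuous and tends to $0$ as $k\to\infty$, connectedness forces $Z(k)\le Z^-$ for all $k\ge k_0$, and the full norm is recovered from $\|v\|_{r\gamma}^{r\gamma}\le Z(k_0)+|\Omega|\,k_0^{r\gamma}$ --- no statement at $k=0$ is needed or obtained. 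Finally, in the endpoint case with $\lambda>0$ your sketch omits the essential use of the zero-order term: one must split $\int_{\Omega_k}|f|^2v_k^{\beta-(p-1)}$ according to whether $|f|^2\le k^{(\beta-(p-1))/\beta}$ and absorb the small-$f$ part into the contribution $\lambda k\int_{\Omega_k}v_k^\beta$ coming from $\lambda\int_{\Omega_k}|Du|^2v_k^\beta/v$ on the left; uniform integrability of $f$ handles only the remaining piece.
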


  \begin{rem}
  The linear case $\tilde a(t) \equiv \tilde a > 0$ has been already covered in \cite{CGell} in the periodic setting and later in \cite{GP} for Neumann problems when $\lambda=0$. Here, we are also able to deal with the borderline integrability exponent $q = \frac{N(\gamma-1)}{\gamma}$. For such value of $q$ the maximal regularity result for linear diffusions is new when $\gamma\geq2$. The case (ii) in the regime $\gamma<2$ is contained in \cite{G}. 
    \end{rem}
  \begin{rem}
  Note that $N\frac{\gamma-(p-1)}{\gamma}>1$ whenever $\gamma>\frac{N(p-1)}{N-1}$, so one has always to restrict at least to this range of growth for the first-order term. 

  \end{rem}
\begin{rem}
Once $|Du|^\gamma$ is controlled in $L^q$ by means of Theorem \ref{main2}, one can regard the equation as $\lambda u  -\mathrm{div}(\tilde a(|D u|)D u) =-|Du|^\gamma+f$, hence $Du$ is  bounded in $L^{\frac{Nq(p-1)}{N-q}}$ by the results in \cite{CianchiMazyaJEMS}. However, it is to our knowledge an open problem the validity of the stronger bound   $|Du|^{p-2}Du\in W^{1,q}$.  
This has been shown to hold in the case $q=2$ in the recent work \cite{CianchiMazyaARMA2}, which by the way is not covered by Theorem \ref{main2}.
\end{rem}

 \begin{proof}
As in the previous case, since our method needs to deal with smooth solutions, we need to first approximate our problem. 
Let us consider the sequence of solutions to 
  \begin{equation} \label{pb}
  \begin{cases}
\lambda u_\eps   - \mathrm{div} \big(\tilde{a} (v_\eps)  Du_\eps\big)+ H(Du_\eps) =f_\eps(x)&\text{ in } \Omega,\\
\partial_\nu u_\eps=0&\text{ on } \partial\Omega\,. 
\end{cases}
  \end{equation}
 where $f_\eps$ is a suitable regularization of $f$ and $ v_\eps=\sqrt{|Du_\eps|^2+\eps}$.

 Let us now set  $v_{k} = v_{\eps, k}=   (\sqrt{|Du_\eps |^2+\eps} -k)^+   $, for any $k\ge k_1$. Moreover, we denote by $\Omega_{k}=\{x\in\Omega: v_\eps >k\}$ and observe that
 \begin{equation*}
 \partial \Omega_{k}=\{x\in\Omega: v_\eps=k\}\cup(\partial\Omega\cap \overline{\Omega}_{k}).
 \end{equation*}
We also preliminary note that
 \begin{equation*}
Dv_{\eps,k}=Dv_\eps\text{ on }\Omega_k   \,, \quad Dv_{\eps,k}=0\text{ on } \Omega\setminus\Omega_k,\qquad \mbox{ and }  \quad 
v_{\eps,k}=0\text{ on }\partial\Omega_k\cap\Omega.
\end{equation*}
Moreover, by Lemma \ref{signw} we have, thanks to the convexity of $\Omega$,  
 \begin{equation}\label{nuvk}
 \partial_\nu v_\eps=\frac{\partial_\nu |Du_\eps|^2}{2v_\eps}\leq 0 \quad \text{ on } \quad \partial\Omega.
 \end{equation}
 
We may also add $c_\eps$ to both sides of the equation, so that, by \eqref{Hbelow}, $H$ satisfies
 \begin{equation}\label{Hbelow2}
\frac{c_H}2(\eps+|\xi|^2)^{\frac{\gamma}{2}} \leq H(\xi),  \qquad \forall \xi \in \R^N \setminus \{0\}\,.
\end{equation}
 
 \medskip

 The result will be a consequence of the following property, that will be shown below: there exist $k_0, c, \omega > 0$ depending on the data and the constants appearing in the assumptions such that $\omega$ is small enough so that the inequality
\[
Z^{\frac{N-2}{N}} < \omega + c Z
\]
is true if and only if $Z \in [0, Z^-) \cup ( Z^+, +\infty)$.  
Our aim is to apply such an inequality to $Z= \|v_{k} \|_{L^{q \gamma} (\Omega)}$, i.e. we are done if we are able to prove that 
 \begin{equation}\label{crucialest}
\exists k_0 >0 \, : \quad \forall k \geq k_0 \qquad 
\left(\int_\Omega \left(\left(v_\eps -k\right)^+\right)^{r \gamma}\,dx\right)^{\frac{N-2}{N}}\leq \omega_k + c \int_\Omega \left(\left(v_\eps -k\right)^+\right)^{r \gamma}\,dx, 
 \end{equation}
with $\omega_k \to 0$ as $k$ diverges. 

 Indeed, once \eqref{crucialest} is established, one can conclude as follows (see  \cite[Section 2, p.1524-1525]{CGell}): since $k \mapsto Z(k) := \int_\Omega \left(\left(v_\eps -k\right)^+\right)^{r \gamma}$ is continuous and it vanishes  as $k\to \infty$, then $Z(k) \le Z^-$ for all $k \ge k_0$, and therefore
 \[
\|v_\eps\|^{r\gamma}_{L^{r\gamma}(\Omega)} \le \|(v_\eps-k_0)^+\|^{r\gamma}_{L^{r\gamma}(\Omega)} + \|k_0\|^{r\gamma}_{L^{r\gamma}(\Omega)} \le Z^- + |\Omega| k_0^{r\gamma}.
 \]
 
 \medskip

   From now on we drop the subscript $\eps$ for brevity, and look of course for estimates which are independent from $\eps$.

To get \eqref{crucialest}, we start by testing the equation in \eqref{pb} by  
$$\varphi=   \mathrm{div}\bigg(D u\ \frac{ v_{k}^{\beta}}{v}\bigg)   \quad \mbox{ for some } \ \beta >1\,,
$$ 
($\beta$ to be  determined later) and  integrating by parts. 
Exploiting that $u$ satisfies the homogeneous Neumann boundary condition, we 
have 
\begin{equation}\label{weak}
- \int_{\Omega} \tilde{a} (v )  Du  \cdot D\varphi \ dx =  
\int_{\Omega} \big( H(Du)  + \lambda u -f(x)  \big) \varphi \ dx \,.
  \end{equation}
 
We first elaborate on the left-hand side of the previous equality.
 
 \begin{lemma}\label{t2s1} We have the following inequality
 \[
 - \int_{\Omega} \tilde{a} (v )  Du  \cdot D\varphi \ dx \ge \int_{\Omega_k} \tilde a(v) \frac{v_k^\beta}v |D^2u|^2\,dx+\bar{c}_{\tilde a}(\beta-1) \int_{\Omega_k}  v^{p-2}   v_{k}^{\beta-1}  \ 
   |D v_k|^2  dx,
 \]
 where $\bar c_{\tilde a}$ is the constant appearing in \eqref{A1t}.
 \end{lemma}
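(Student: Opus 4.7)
The plan is to mirror the integration-by-parts scheme used in Lemma \ref{diff1}, but adapted to the truncated weight $g = v_k^\beta/v$ in place of the pure power $w^\beta$. Starting from $\varphi = \mathrm{div}(Du \, g)$, I would write
\[
-\int_\Omega \tilde a(v) Du \cdot D\varphi \, dx = -\sum_{i,j}\int_\Omega \tilde a(v) \partial_{x_i}u \, \partial_{x_i x_j}(\partial_{x_j}u \, g)\, dx
\]
and integrate by parts in $x_j$. This produces a bulk term plus a boundary contribution of the form $-\int_{\partial\Omega} \tilde a(v) \partial_{x_i}u \, \partial_{x_i}(\partial_{x_j}u\, g)\, \nu_j \, dS$. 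The first step is to show this boundary term is non-positive, so that discarding it yields a useful lower bound.

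To handle the boundary term, I would use $\partial_\nu u = 0$ on $\partial\Omega$ to kill the piece involving $\partial_{x_i}g$, which leaves $\int_{\partial\Omega} \tilde a(v) \, g \sum_{i,j}\partial_{x_i}u\, \partial_{x_i x_j}u\, \nu_j\, dS = \int_{\partial\Omega}\tilde a(v)\, g\, v\, \partial_\nu v\, dS$. By \eqref{nuvk}, which relies on Lemma \ref{signw} and the convexity of $\Omega$, this quantity is non-positive, and the boundary term consequently has the right sign when moved to the other side.

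Next, I would expand the remaining bulk integrand
$\sum_{i,j}[\tilde a'(v)\partial_{x_j}v\, \partial_{x_i}u + \tilde a(v)\partial_{x_i x_j}u][\partial_{x_i x_j}u \, g + \partial_{x_j}u\, \partial_{x_i}g]$
and regroup using the identities $\sum_i \partial_{x_i}u\, \partial_{x_i x_j}u = v\partial_{x_j}v$ and $Du \cdot Dv = \Delta_\infty u /v$. Four terms appear: one gives exactly $\tilde a(v) g\, |D^2u|^2$ (the first quantity on the right-hand side of the claim); two of them, namely $\tilde a'(v)\, g\, v\, |Dv|^2$ and $\tilde a'(v)\, v^{-1}\Delta_\infty u\, Du\cdot Dg$, are non-negative by \eqref{At} and can be dropped; and the last one $\tilde a(v) \, v\, Dg\cdot Dv$ furnishes the $|Dv_k|^2$ contribution, provided $Dg$ is computed on $\Omega_k$ where $Dv_k = Dv$.

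Finally, on $\Omega_k$ a direct calculation gives
\[
v\, Dg \cdot Dv = \frac{v_k^{\beta-1}[(\beta-1)v+k]}{v}\,|Dv_k|^2 \ge (\beta-1)\, v_k^{\beta-1}\, |Dv_k|^2,
\]
since $k\ge 0$ and $v>0$ on $\Omega_k$. Combining with the lower bound $\tilde a(v) \ge \bar c_{\tilde a} v^{p-2}$ from \eqref{A1t} yields the constant $\bar c_{\tilde a}(\beta-1)$ and the weight $v^{p-2}v_k^{\beta-1}$. I expect the main bookkeeping obstacle to be keeping track of signs while extracting exactly the factor $(\beta-1)$ in the $|Dv_k|^2$ term—this is precisely where the substitution $v_k = v-k$ on $\Omega_k$ is needed, and the fact that $k\ge0$ (rather than any sign of $\tilde a'$) is what guarantees the inequality.
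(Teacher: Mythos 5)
Your proposal is correct and follows essentially the same route as the paper's proof: integrate by parts, discard the boundary term via the Neumann condition together with \eqref{nuvk}, drop the two $\tilde a'$-terms by their sign (using \eqref{At} and the nonnegativity of the coefficient of $Dv$ in $Dg$), and extract the factor $(\beta-1)$ from $\beta v v_k^{\beta-1}-v_k^{\beta}=v_k^{\beta-1}\bigl((\beta-1)v+k\bigr)$, which is just the paper's inequality $\beta v^{-1}v_k^{\beta-1}-v^{-2}v_k^{\beta}\ge(\beta-1)v_k^{\beta-1}/v$ written via $v_k=v-k$. The only differences are notational (you group terms through $Dg$ rather than componentwise), so nothing further is needed.
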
 
 
 \begin{proof} 
 Using the boundary condition $\partial_{\nu} u =0$ on $\partial \Omega$, we deduce that the left-hand side of the above inequality is equal to  
   \begin{multline*}
- \int_{\Omega} \tilde{a} (v) Du\cdot D\varphi\,dx
 = - \sum_{i,j=1}^N \int_{\Omega} \tilde{a} (v)  \ \partial_{x_i}  u\  \partial_{x_i}\bigg(\partial_{x_j}\Big(\partial_{x_j}u\ \frac{v_{k}^{\beta}}{v} \ \Big)\bigg)\,dx\\
 = - \sum_{i,j=1}^N\int_{\Omega}  \tilde{a} (v)  \ \partial_{x_i}  u \  \partial_{x_j}\bigg(\partial_{x_i} \Big(\partial_{x_j}u\ \frac{v_{k}^{\beta}}{v} \ \Big)\bigg)\,dx
    \end{multline*}
       \begin{multline*}
 = \sum_{i,j=1}^N\int_{\Omega} \partial_{x_j}\big(\tilde{a} (v)  \ \partial_{x_i} u\big) \partial_{x_i} \bigg(\partial_{x_j}u\ \frac{v_{k}^{\beta}}{v} \ \bigg)\,dx
  - \sum_{i,j=1}^N  \int_{\partial \Omega } \tilde{a} (v)  \ \partial_{x_i} u \   \partial_{x_i}\bigg(\partial_{x_j} u\ \frac{v_{k}^{\beta}}{v} \  \bigg) \nu_j \,dS\\
  : = I_1 + I_2\,.
  \end{multline*}

 We first handle the boundary terms as follows
\begin{align*} 
   I_2 &= -  \sum_{i,j=1}^N  \int_{\partial \Omega} \tilde{a} (v)   \ \partial_{x_i} u \    \partial_{x_i}\bigg(\frac{v_{k}^{\beta}}{v} \  \bigg) \big( \partial_{x_j} u \ \nu_j \big) \,dS
-
\sum_{i,j=1}^N  \int_{\partial\Omega} \tilde{a} (v)    \frac{v_{k}^{\beta}}{v} \     \bigg(\partial_{x_j  x_i} u\  \partial_{x_i} u  \ \nu_j  \ 	 \bigg)\,dS
\\
&=  -   \int_{\partial  \Omega} \tilde{a} (v)  \   D u  \cdot  D \big(\frac{v_{k}^{\beta}}{v} \  \big)   \partial_{\nu} u   \,dS
-
\int_{\partial \Omega} \tilde{a} (v)    \  v_{k}^{\beta}    \ \partial_{\nu} v   \ \,dS\geq 0 \,,
\end{align*}
where we have used both the boundary condition on $u$ and inequality \eqref{nuvk}.

Then, since $v_k$ vanishes outside $\Omega_k$, we get 
\begin{align*}
I_1&= \sum_{i,j=1}^N\int_{\Omega_k} \partial_{x_j}\big(\tilde{a} (v)  \ \partial_{x_i} u\big) \partial_{x_i} \bigg(\partial_{x_j}u\ \frac{v_{k}^{\beta}}{v} \ \bigg)\,dx \\
&= \sum_{i,j=1}^N \int_{\Omega_k} \left( \tilde{a}' (v) \ \partial_{x_j}v \ \partial_{x_i}u+\tilde{a} (v)  \ \partial_{x_ix_j}u\right)\left(\frac{v_k^\beta}{v}\ \partial_{x_ix_j}u+\frac{\beta v v_k^{\beta-1} \partial_{x_i}v_k- v_k^\beta \partial_{x_i}v }{v^2} \ \partial_{x_j}u \right) \,dx.
\end{align*}
As $v\geq v_k$, we have in particular that 
\[
\beta v^{-1}v_k^{\beta-1}-v^{-2}v_k^\beta\geq (\beta-1)\frac{v_k^{\beta-1}}{v}, 
\]
and therefore, owing to the fact that $a' (s) \geq 0 $  (see \eqref{At}) and since $\beta> 1$, we get
\[
\sum_{i,j=1}^N\int_{\Omega_k} \tilde{a}' (v) \frac{\beta v v_k^{\beta-1} - v_k^\beta}{v^2}  \ 
 \partial_{x_j}v\partial_{x_i}u\partial_{x_j}u \partial_{x_i}v_k \,dx
\geq (\beta-1)\int_{\Omega_k}\tilde{a}' (v) \frac{v_k^{\beta-1}}{v} \ (Dv\cdot Du)^2 \,dx \geq0,
\]
and also
\[
\sum_{i,j=1}^N\int_{\Omega_k}\tilde{a}' (v) \frac{v_k^\beta}{v} \ \partial_{x_j}v\partial_{x_i}u\partial_{x_ix_j}u \,dx =
\sum_{i,j=1}^N\int_{\Omega_k}\tilde{a}' (v) {v_k^\beta}  |Dv|^2 \,dx
\geq0\ .
\]
Thus, we are left with
\begin{align*}
I_1&\geq \sum_{i,j=1}^N\int_{\Omega_k}\tilde{a} (v)  \, \partial_{x_ix_j}u\left(\partial_{x_ix_j}u\frac{v_k^\beta}{v}+\partial_{x_j}u\frac{\beta v v_k^{\beta-1} \partial_{x_i}v_k-\partial_{x_i}v v_k^\beta}{v^2} \right) \,dx
\\&\geq \sum_{i,j=1}^N\int_{\Omega_k}\tilde{a} (v)  \,  \partial_{x_ix_j}u\left(\partial_{x_ix_j}u\frac{v_k^\beta}{v}+(\beta-1) \partial_{x_j}u\partial_{x_i}v\frac{v_k^{\beta-1}}{v}\right) \,dx \\
&=\int_{\Omega_k}\tilde{a} (v)  \, |D^2u|^2\frac{v_k^\beta}{v} \,dx+(\beta-1) \int_{\Omega_k}  \tilde{a} (v)  \,  v_{k}^{\beta-1}  \ 
   |D v_k|^2  dx \\
   &\geq \int_{\Omega_k}\tilde{a} (v)  \, |D^2u|^2\frac{v_k^\beta}{v} \,dx+\bar c_{\tilde a}(\beta-1) \int_{\Omega_k}  v^{p-2}   v_{k}^{\beta-1}  \ 
   |D v_k|^2  dx,
\end{align*}
where we used  \eqref{A1t} in the last inequality.
\end{proof}

\begin{lemma}\label{t2s2}There exist constants $c_{10},c_{11} > 0$ depending on $\bar c_{\tilde a}, \bar C_{\tilde a}, c_H,  N, C_{\tilde{a}}, \lambda$ such that the following inequality holds
\begin{equation}\label{l36}
\int_{\Omega_k}\tilde{a}(v)|D^2u|^2\frac{v_k^\beta}{v}\,dx\geq
c_{10}\int_{\Omega_k}v^{2\gamma+1-p}v_k^\beta\,dx-  
c_{11}\int_{\Omega_k}(\lambda u-f)^2v_k^\beta v^{1-p}\,dx
\end{equation}
\end{lemma}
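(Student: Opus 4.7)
The plan is to transcribe the argument of Lemma \ref{diff2} to the present setting, with minor adjustments to accommodate the zero-th order term $\lambda u$ and the notation $\tilde a(t) = a(t^2)$.

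First I would recast the approximated equation in \eqref{pb} in non-divergence form. Since $v = \sqrt{|Du|^2+\eps}$ implies $v\, Dv = D^2 u \, Du$, an elementary computation gives $\mathrm{div}(\tilde a(v)Du) = \tilde a(v)\mathcal{A}(D^2u)$, where $\mathcal{A}(D^2u) := \Delta u + \frac{\tilde a'(v)}{v \,\tilde a(v)}\Delta_\infty u$ and $\Delta_\infty u = D^2 u\, Du \cdot Du$. Thus the equation reads $\tilde a(v)\mathcal{A}(D^2u) = H(Du) - (f-\lambda u)$. Next, I would prove the ``reverse Cauchy--Schwarz'' bound $|D^2u| \ge \nu |\mathcal{A}(D^2u)|$ with $\nu := 1/(\sqrt N + C_{\tilde a})$: this follows from the standard estimates $|\Delta u| \le \sqrt N |D^2 u|$ and $|\Delta_\infty u| \le v^2 |D^2 u|$, together with the bound $v\,\tilde a'(v)/\tilde a(v) \le C_{\tilde a}$ coming from \eqref{At}.

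Plugging the equation back in, I obtain
\[
\tilde a(v)|D^2u|^2 \ge \nu^2\,\frac{[\tilde a(v)\mathcal{A}(D^2u)]^2}{\tilde a(v)} = \nu^2\,\frac{[H(Du) - (f-\lambda u)]^2}{\tilde a(v)}.
\]
Applying the elementary inequality $(A-B)^2 \ge A^2/2 - 2B^2$ with $A = H(Du)$ and $B = f-\lambda u$, together with the pointwise lower bound \eqref{Hbelow2} (which gives $H(Du)^2 \ge c_H^2 v^{2\gamma}/4$) and the upper bound $\tilde a(v) \le \bar C_{\tilde a} v^{p-2}$ from \eqref{A1t}, one obtains
\[
\tilde a(v)|D^2u|^2 \ge \frac{\nu^2}{\bar C_{\tilde a}}\left[\frac{c_H^2}{8}\,v^{2\gamma+2-p} - 2(\lambda u - f)^2\,v^{2-p}\right].
\]
Multiplying by $v_k^\beta/v$ and integrating over $\Omega_k$ yields \eqref{l36} with $c_{10} = \nu^2 c_H^2/(8\bar C_{\tilde a})$ and $c_{11} = 2\nu^2/\bar C_{\tilde a}$.

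I do not anticipate a structural obstacle, since the argument is essentially identical to Lemma \ref{diff2}, the only bookkeeping difference being that $f$ is replaced by $f-\lambda u$ throughout and is absorbed harmlessly into the source term on the right-hand side. The step requiring the most care is the verification of the reverse Cauchy--Schwarz inequality, which crucially uses the two-sided control on $t\,\tilde a'(t)/\tilde a(t)$ in \eqref{At}; in contrast to Lemma \ref{diff2}, here the constraint $p \ge 2$ is not explicitly invoked at this step, but it was already used in Lemma \ref{t2s1} to produce the coercive $|Dv_k|^2$ term that will absorb the error term in the next step of the main argument.
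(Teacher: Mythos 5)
Your proposal is correct and follows essentially the same route as the paper's own proof: rewrite the operator in non-divergence form, establish the reverse Cauchy--Schwarz bound $|D^2u|\ge \nu|\tilde{\mathcal A}(D^2u)|$ using \eqref{At}, substitute the equation, apply $(A-B)^2\ge A^2/2-2B^2$ together with \eqref{Hbelow2} and \eqref{A1t}, and multiply by $v_k^\beta/v$ before integrating over $\Omega_k$. The only differences are cosmetic (your $\nu$ is the reciprocal of the paper's), so there is nothing to add.
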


\begin{proof}
 Note first that
\begin{multline*}
\mathrm{div}(\tilde{a}(v)Du)=\tilde{a}(v)\Delta u+\tilde{a}'(v)\frac{D^2uDu}{v}\cdot Du=\tilde{a}(v)\Delta u+\tilde{a}'(v)\frac{\Delta_\infty u}{v}
=\tilde{a}(v)\underbrace{\left[\Delta u+\frac{\tilde a'(v)}{v\tilde a(v)}\Delta_\infty u\right]}_{:= \tilde{\mathcal{A}}(D^2u)}.
\end{multline*}
Therefore, as before we define $\nu =\sqrt{N}+C_{\tilde{a}}$ and we have 
\[
|\tilde{\mathcal{A}}(D^2u)|=\left|\Delta u+\frac{\tilde{a}'(v)}{v\tilde{a}(v)}D^2uDuDu\right|\leq \sqrt{N}|D^2u|+C_{\tilde{a}}\frac{|D^2u||Du|^2}{\eps+|Du|^2}\leq \nu  |D^2u|.
\]
We exploit, as in Theorem \ref{main1}, the non-divergence formulation of the equation, i.e. we consider 
\[\tilde{a}(v)\tilde{\mathcal{A}}(D^2u)=H(Du)+ \big( \lambda u-f \big)  \quad \mbox{ in } \Omega\,,\] 
and  plugging the equation back in the term $\tilde{a}(v)|D^2u|^2$  
by  \eqref{At} and  \eqref{Hbelow2},   we get 
\begin{multline*}
\tilde{a}(v)|D^2u|^2\geq \frac{[\tilde{a}(v)\tilde{\mathcal{A}}(D^2u)]^2}{\nu^2 \tilde{a}(v)}\\
\geq \frac{1}{\nu^ 2 C_2}\left[H(Du)+\lambda u-f\right]^2v^{2-p}\geq \frac{1}{\nu^ 2C_2}\left[\frac{H^2(Du)}{2}- 2 (\lambda u- f)^2\right]v^{2-p}\\
= \frac{c_H^2}{\nu^2 C_2}\left\{\frac{v^{2\gamma}}{8}-2 (\lambda u- f)^2 \right\}v^{2-p}, 
\end{multline*}
and thus \eqref{l36} holds. 
\end{proof}

\begin{lemma}\label{t2s3} There exist constants $c_{12}, c_{13} > 0$ depending on $\bar c_{\tilde a}, \bar C_{\tilde a}, c_H,  C_H, N, C_{\tilde{a}}, \lambda, \beta,p $ such that the following inequality holds true:
\begin{align*}
 - \int_{\Omega} \tilde{a} (v )  Du  \cdot D\varphi \ dx   \ge c_{12} \int_{\Omega_k} v^{p-3 } v_k^\beta |D^2u|^2\,dx + c_{12} \left(\int_{\Omega_k} v_k^{(p+\beta-1)\frac{N}{N-2}}\right)^{\frac{N-2}{N}} \\ 
  +c_{12}\int_{\Omega_k} v^{p-2}   v_{k}^{\beta-1}  \ |D v_k|^2  dx + c_{12}\int_{\Omega_k}v^{2\gamma+1-p}v_k^\beta\,dx  
-c_{13}\int_{\Omega_k}2 (\lambda u- f)^2 v_k^\beta v^{1-p}\,dx .
\end{align*}
\end{lemma}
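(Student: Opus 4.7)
The plan is to carve out the four positive terms plus the negative residue in the claim from the two lower-bound contributions supplied by Lemma \ref{t2s1}. Specifically, I split the weighted Hessian integral $\int_{\Omega_k}\tilde a(v)\,v_k^\beta\,|D^2u|^2/v\,dx$ into two halves, and likewise the weighted gradient integral $\bar c_{\tilde a}(\beta-1)\int_{\Omega_k} v^{p-2}v_k^{\beta-1}|Dv_k|^2\,dx$. Each half is then processed differently to extract the requested pieces.

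On the first half of the Hessian integral I apply the pointwise ellipticity bound $\tilde a(v)\geq \bar c_{\tilde a}\,v^{p-2}$ from \eqref{A1t} directly, yielding the explicit $v^{p-3}v_k^\beta|D^2u|^2$ term of the claim. On the second half I invoke Lemma \ref{t2s2}, which converts it into the coercive superlinear term $v^{2\gamma+1-p}v_k^\beta$ at the price of the negative $(\lambda u-f)^2v_k^\beta v^{1-p}$ defect. For the gradient integral, one half is retained verbatim, delivering the third positive term. On the other half, I use the hypothesis $p\geq 2$ together with $v\geq v_k$ on $\Omega_k$ to lower-bound $v^{p-2}$ by $v_k^{p-2}$; the resulting integrand equals $\tfrac{4}{(p+\beta-1)^2}|D(v_k^{(p+\beta-1)/2})|^2$. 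Since $\Omega$ is $C^2$, the standard Sobolev embedding $W^{1,2}(\Omega)\hookrightarrow L^{2N/(N-2)}(\Omega)$ applied to $v_k^{(p+\beta-1)/2}$ produces the desired $\bigl(\int_{\Omega_k} v_k^{(p+\beta-1)N/(N-2)}\bigr)^{(N-2)/N}$ term.

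The main obstacle is the inhomogeneous $L^{p+\beta-1}$-norm of $v_k$ generated by the Sobolev inequality: this residue appears because $v_k$ vanishes only on the interior part $\{v=k\}$ of $\partial\Omega_k$, not on $\partial\Omega\cap\overline{\Omega_k}$, so only the inhomogeneous embedding is available. I absorb this defect into the superlinear term using the threshold structure of $\Omega_k$. Since $v\geq k$ and $v_k\leq v$ there, and since $\gamma>p-1$ implies $2(p-1-\gamma)<0$, the pointwise estimate
\[
v_k^{p+\beta-1}\leq v^{p-1}v_k^\beta = v^{2(p-1-\gamma)}\,v^{2\gamma+1-p}v_k^\beta \leq k^{-2(\gamma+1-p)}\,v^{2\gamma+1-p}v_k^\beta
\]
holds on $\Omega_k$. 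Choosing $k$ larger than a threshold $k_0$ depending only on the constants already fixed makes the prefactor $k^{-2(\gamma+1-p)}$ arbitrarily small, so the defect is reabsorbed into a fraction of the $v^{2\gamma+1-p}v_k^\beta$ contribution already at hand.

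Combining the five pieces and relabelling constants gives the stated inequality with $c_{12}$ depending on $\bar c_{\tilde a}, \bar C_{\tilde a}, c_H, N, C_{\tilde a}, \beta, p, C_{\mathcal S}$ and $c_{13}$ proportional to $c_{11}$. The restriction $k\geq k_0$ is benign since the crucial estimate \eqref{crucialest} is needed only for large $k$: the threshold $k_0$ can be folded into the constant $\omega_k$ appearing there, which is required to tend to zero as $k\to\infty$ anyway.
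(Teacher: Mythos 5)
Your proposal is correct, and its skeleton is the same as the paper's: split each of the two positive contributions from Lemma \ref{t2s1} in half, use the ellipticity bound \eqref{A1t} on one half of the Hessian integral to produce $v^{p-3}v_k^\beta|D^2u|^2$, feed the other half into Lemma \ref{t2s2}, retain half of the gradient integral verbatim, and apply $v^{p-2}\ge v_k^{p-2}$ (using $p\ge 2$ and $v\ge v_k$) plus the inhomogeneous Sobolev embedding to the remaining half. The one genuine difference is the treatment of the lower-order Sobolev defect $\int_{\Omega_k}v_k^{p+\beta-1}\,dx$: the paper does not absorb it at this stage but carries it forward (it resurfaces as the term $\int_{\Omega_k}v_k^{p+\beta-1}\,dx$ in \eqref{mainineq} and is only removed later via the H\"older estimate \eqref{Hol}, using $|\Omega_k|^{2/N}$ small for $k$ large), whereas you absorb it on the spot into the coercive term through the pointwise bound $v_k^{p+\beta-1}\le k^{-2(\gamma+1-p)}v^{2\gamma+1-p}v_k^\beta$ on $\Omega_k$, valid since $\gamma>p-1$ and $v>k$ there. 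Your route has the merit of actually yielding the inequality exactly as displayed in the lemma (which, as printed, omits the defect term that the paper's own proof produces), at the price of an explicit threshold $k\ge k_0$; the paper's route keeps the lemma valid for all $k\ge k_1$ but with an extra negative term, postponing the largeness-of-$k$ argument. Since \eqref{crucialest} is only needed for large $k$, both choices are equally serviceable downstream.
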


\begin{proof} Fist,  we exploit the Sobolev inequality on the term $\int_{\Omega_k}  v^{p-2}   v_{k}^{\beta-1} |D v_k|^2  dx$, and the fact that $v_k\leq v$ (and $p \ge 2$) to get
\begin{multline*}
\int_{\Omega_k}  v^{p-2}   v_{k}^{\beta-1} |D v_k|^2  dx\geq  \int_{\Omega_k}  v_{k}^{\beta+p-3} |D v_k|^2  dx=\frac{4}{(p+\beta-1)^2}\int_{\Omega_k}|Dv_k^{\frac{p+\beta-1}{2}}|^2\\
\geq \frac{4C_{\mathcal{S}}}{(p+\beta-1)^2}\left(\int_{\Omega_k} v_k^{(p+\beta-1)\frac{N}{N-2}}\right)^{\frac{N-2}{N}}-\frac{4}{(p+\beta-1)^2} \int_{\Omega_k} v_k^{p+\beta-1}\,dx,
\end{multline*}
where $C_{\mathcal{S}}$ is the constant of the Sobolev embedding. To conclude, it now suffices to use Lemma \ref{t2s1} and Lemma \ref{t2s2}.
\end{proof}

We now work on the right-hand side of \eqref{weak}, that is,
\begin{multline*}
\int_{\Omega_k} \big( H(Du)  + \lambda u -f  \big) \mathrm{div}\Big(D u\ \frac{ v_{k}^{\beta}}{v}\Big) \ dx\\
=\underbrace{\int_{\Omega_k}  H(Du)\mathrm{div}\Big(D u\ \frac{ v_{k}^{\beta}}{v}\Big)}_{:= I_3}
+
\underbrace{ \lambda  \int_{\Omega_k}u\ \mathrm{div}\Big(D u\ \frac{ v_{k}^{\beta}}{v}\Big)}_{:=I_4}- \underbrace{\int_{\Omega_k}f\ \mathrm{div}\Big(D u\ \frac{ v_{k}^{\beta}}{v}\Big)}_{:=I_5}.
\end{multline*}

\begin{lemma}\label{t2s4} For any $\delta > 0$, there exists $c_{14} > 0$ depending on $\delta, c_H, C_H,\gamma,p,N$ such that 
\begin{align*}
I_3 + I_4 - I_5   \le -\lambda \int_{\Omega_k}|Du|^2\frac{ v_{k}^{\beta}}{v}\,dx  
  + \delta \int_{\Omega_k} v^{p-2} v_{k}^{\beta-1} |Dv_k|^2 \,dx + \delta \int_{\Omega_k} v^{2\gamma+1 -p} v_{k}^{\beta}\,dx \\
  + \delta \int_{\Omega_k} |D^2u|^2v^{p-3}v_{k}^\beta\,dx + \delta \int_{\Omega_k} v_{k}^{p+\beta-3}|Dv_k|^2\,dx 
  + c_{14} \int_{\Omega_k}v_{k}^{\beta+2\gamma+1 - p }\,dx + c_{14} \int_{\Omega_k} |f|^2v_{k}^{\beta+ 1-p}\,dx.
\end{align*}

\end{lemma}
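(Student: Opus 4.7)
The plan is to integrate by parts each of $I_3, I_4, I_5$, to use the Neumann condition $\partial_\nu u = 0$ and the vanishing of $v_k^\beta$ on the internal boundary $\{v=k\}$ to kill all boundary contributions, and then to redistribute the remaining products via weighted Young inequalities into exactly the seven integrals appearing in the claim.

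\textbf{The term $I_4$.} A single integration by parts yields $I_4 = -\lambda\int_{\Omega_k}|Du|^2 v_k^\beta/v\,dx$, which is precisely the first term on the right-hand side of the claim.

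\textbf{The term $I_3$.} Integration by parts produces $I_3 = -\int_{\Omega_k} H_\xi(Du)\cdot(D^2u\,Du)\,v_k^\beta/v\,dx$. Using the identity $D^2u\,Du = v\,Dv$ (obtained by differentiating $v = \sqrt{|Du|^2+\eps}$), together with \eqref{H1t} and $|Du|\le v$, and recalling $Dv=Dv_k$ on $\Omega_k$, one obtains
\[
|I_3|\le C_H\int_{\Omega_k} v^{\gamma-1}\, v_k^\beta\,|Dv_k|\,dx.
\]
This integrand is then split by a \emph{three-term weighted Young inequality}: choosing $\theta_1 = 1/2$, $\theta_2 = (2\gamma-p)/(2(2\gamma+1-p))$, $\theta_3 = 1/(2(2\gamma+1-p))$ (all positive and summing to $1$, since $\gamma > p-1 \ge p/2$ for $p\ge 2$), a direct check on the exponents of $v$, $v_k$ and $|Dv_k|$ gives the factorisation
\[
\bigl(|Dv_k|^2 v^{p-2}v_k^{\beta-1}\bigr)^{\theta_1}\bigl(v^{2\gamma+1-p}v_k^\beta\bigr)^{\theta_2}\bigl(v_k^{\beta+2\gamma+1-p}\bigr)^{\theta_3} = v^{\gamma-1}v_k^\beta|Dv_k|.
\]
Weighted AM--GM, after rescaling each of the three factors by free parameters, allocates an arbitrarily small coefficient $\delta$ in front of the first two resulting integrals at the cost of a large $\delta$-dependent constant in front of the third. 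This produces the three contributions of the claim coming from $I_3$: the $\delta\int v^{p-2}v_k^{\beta-1}|Dv_k|^2$, the $\delta\int v^{2\gamma+1-p}v_k^\beta$, and the $c_{14}\int v_k^{\beta+2\gamma+1-p}$ terms.

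\textbf{The term $I_5$.} Expand
\[
\mathrm{div}\bigl(Du\,v_k^\beta/v\bigr) = \Delta u\,\frac{v_k^\beta}{v} + Du\cdot D\!\left(\frac{v_k^\beta}{v}\right).
\]
The first piece is bounded via $|\Delta u|\le\sqrt N\,|D^2u|$. For the second, a direct computation gives $D(v_k^\beta/v) = v_k^{\beta-1}(\beta v - v_k)v^{-2}\,Dv$ on $\Omega_k$; combined with the sharp bound $|\Delta_\infty u| = |(D^2u\,Du)\cdot Du| \le v^2|Dv|$ (better than the cruder $v^2|D^2u|$), this yields $|Du\cdot D(v_k^\beta/v)|\le (\beta+1)\,v_k^{\beta-1}|Dv_k|$. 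Two standard two-term Young inequalities are then applied to $|f|\,|D^2u|\,v_k^\beta/v$ and to $|f|\,v_k^{\beta-1}|Dv_k|$; in the first case we additionally use $v^{1-p}\le v_k^{1-p}$, valid for $p\ge 1$. These produce the remaining $\delta\int|D^2u|^2 v^{p-3}v_k^\beta$, $\delta\int v_k^{p+\beta-3}|Dv_k|^2$ and $c_{14}\int|f|^2 v_k^{\beta+1-p}$ terms. Summing the bounds for $I_3, I_4, -I_5$ closes the argument.

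The main obstacle is the three-term weighted Young for $I_3$: one must split $v^{\gamma-1}v_k^\beta|Dv_k|$ into two pieces that match exactly the Sobolev-type good term $|Dv_k|^2 v^{p-2}v_k^{\beta-1}$ produced by Lemma~\ref{t2s1} and the coercivity good term $v^{2\gamma+1-p}v_k^\beta$ produced by Lemma~\ref{t2s2}, plus a pure $v_k$-power residual with the large coefficient $c_{14}$. The technical step that makes this splitting compatible with the good terms is the rewriting $D^2u\,Du = v\,Dv$ (and, in $I_5$, the sharpening $|\Delta_\infty u|\le v^2|Dv|$), which turns the $|D^2u|$ factor naturally appearing in $I_3$ into the $|Dv_k|$ factor that matches the Sobolev-type good term.
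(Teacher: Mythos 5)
Your proposal is correct and follows essentially the same route as the paper: the same integration by parts for $I_3$ and $I_4$, the same use of $vDv=D^2u\,Du$ to convert $|D^2u|$ into $|Dv_k|$, and the same Young-inequality redistribution into the seven target integrals (your single three-term weighted Young for $I_3$ is just the composition of the paper's two successive two-term Young inequalities, with matching exponents). The exponent bookkeeping in your three-factor factorisation checks out, including the positivity of $\theta_2$ under $p\ge 2$, $\gamma>p-1$.
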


\begin{proof}
We start with $I_3$ above, and use the divergence theorem, the boundary condition $\partial_\nu u=0$ at $\partial\Omega$, \eqref{H1t} 
to find that
\begin{multline*}
\int_{\Omega}  H(Du)\mathrm{div}\Big(D u\ \frac{ v_{k}^{\beta}}{v}\Big)\,dx=-\int_{\Omega}\partial_{x_j}(H(Du))\partial_{x_j}u\ \frac{ v_{k}^{\beta}}{v}\,dx+\int_{\partial\Omega}H(Du)\ \frac{ v_{k}^{\beta}}{v}\partial_\nu u\,dS\\
=-\int_{\Omega}H_\xi(Du)D^2uDu\frac{ v_{k}^{\beta}}{v}\,dx\leq \int_{\Omega_k}|H_\xi(Du)||Dv|v_{k}^{\beta}\,dx\leq C_H\int_{\Omega_k} v^{\gamma-1}|Dv|v_k^\beta \, dx.
\end{multline*}
We now set
\[
\eta=2\gamma-p+1
\]
and observe that $\eta>1$ since $p\geq2$ and $\gamma>p-1$. Thus, for any $\delta>0$ we have

\[
 C_H\int_{\Omega_k} v^{\gamma-1}|Dv|v_k^\beta\,dx \leq \delta\int_{\Omega_k} v^{p-2}|Dv|^2v_{k}^{\beta-1}\,dx+\frac{C_H^2}{4\delta}\int_{\Omega_k} v^{2\gamma-p}v_{k}^{\beta+1}\,dx.
\]
    Moreover, applying  the weighted Young's inequality to the last term, we have that 
    \[
    \frac{C_H^2}{ 4\delta}\int_{\Omega_k} v^{\eta-1}v_{k}^{\frac{\beta}{\eta'}}v_{k}^{\frac{\beta}{\eta}+1}\,dx\leq \delta \int_{\Omega_k} v^\eta v_{k}^{\beta}\,dx+C(C_H,\gamma,p,N,\delta)\int_{\Omega_k} v_{k}^{\beta+\eta}\,dx\,,
    \]
and thus 
 \[
I_3 \leq \delta \int_{\Omega_k} v^{p-2}|Dv|^2v_{k}^{\beta-1}\,dx+ \delta \int_{\Omega_k} v^\eta v_{k}^{\beta}\,dx+C(c_H,\gamma,p,N,\delta) \int_{\Omega}v_{k}^{\beta+\eta}\,dx.
 \]
 
 \medskip 
 
In order to handle  $I_4$   we apply  the divergence theorem and we exploit the Neumann boundary condition, so that 
\[
I_4= \lambda \int_{\Omega}u \ \mathrm{div}\Big(D u\ \frac{ v_{k}^{\beta}}{v}\Big)=-\lambda \int_{\Omega}|Du|^2\frac{ v_{k}^{\beta}}{v}\,dx+\lambda\int_{\partial\Omega}\frac{ uv_{k}^{\beta}}{v}\partial_\nu u\,dS=-\lambda \int_{\Omega_k}|Du|^2\frac{ v_{k}^{\beta}}{v}\,dx.
\]

 \medskip 
 We finally consider the term $I_5$ involving the source $f$ of the equation, which becomes, after integration by parts (recall also that $v_k \le v$)
    \[
|I_5| =     \left|\int_{\Omega} f\mathrm{div}\left(\frac{v_{k}^\beta}{v} Du\right)\,dx\right|\leq \int_{\Omega_k} |f|\big[v^{-1}v_{k}^\beta\sqrt{N}|D^2u|+(\beta+1)|Dv_{k}|v_{k}^{\beta-1} \big]\,dx.
    \]
    Then, the Young's inequality gives for any positive   $\delta$
    \[
    \int_{\Omega_k} |f|v^{-1}v_{k}^\beta\sqrt{N}|D^2u|\,dx\leq \delta \int_{\Omega_k} |D^2u|^2v^{p-3}v_{k}^\beta\,dx+\frac{C}{\delta}\int_{\Omega_k} |f|^2v^{1-p}v_{k}^\beta\,dx\,,
    \]
and moreover 
    \[
    (\beta+1)\int_{\Omega_k} |f||Dv_{k}|v_{k}^{\beta-1}\,dx\leq \delta \int_{\Omega_k} v_{k}^{p+\beta-3}|Dv_k|^2\,dx+\frac{C}{\delta}\int_{\Omega_k} |f|^2v_{k}^{1-p+\beta}\,dx.
    \]
    Since $v\geq v_{k}$ and $p>1$, it follows that $v^{1-p}\leq v_{k}^{1-p}$, and collecting all the above estimates we are done.
     \end{proof}

\begin{lemma}
There exists $c_{15} > 0$ depending on $\bar c_{\tilde a}, \bar C_{\tilde a}, C_H, c_H, N, C_{\tilde{a}}, \lambda, \beta,p, \gamma$ such that, for any $r>2$ 
\begin{multline}\label{mainineq}
c_{15} \left(\int_{\Omega_k} v_k^{(p+\beta-1)\frac{N}{N-2}}\,dx\right)^{\frac{N-2}{N}} + \lambda c_{15} \int_{\Omega_k}|Du|^2\frac{ v_{k}^{\beta}}{v}\,dx 
\\ 
\le  \int_{\Omega_k} |f|^r\,dx+  \int_{\Omega_k} v_{k}^{(\beta + 1-p)\frac{r}{r-2}}\,dx 
+ \lambda^2 \int_{\Omega_k} |u|^2 v_k^\beta v^{1-p}\,dx  +  \int_{\Omega_k} v_k^{p+\beta-1}\, dx+
 \int_{\Omega_k}v_{k}^{\beta+2\gamma - p + 1}\,dx.
\end{multline}
\end{lemma}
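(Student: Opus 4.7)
\emph{Plan.} The strategy is to combine Lemmas \ref{t2s3} and \ref{t2s4} by substituting them, respectively, into the left- and right-hand sides of the weak identity \eqref{weak}. The right-hand side of Lemma \ref{t2s3} contains four positive ``absorbing'' terms: $\int v^{p-3} v_k^\beta |D^2u|^2$, $\int v^{p-2}v_k^{\beta-1}|Dv_k|^2$, the Sobolev quantity $\big(\int v_k^{(p+\beta-1)\frac{N}{N-2}}\big)^{\frac{N-2}{N}}$, and $\int v^{2\gamma+1-p}v_k^\beta$. Each of the five $\delta$-terms appearing in Lemma \ref{t2s4} matches one of these, so by choosing $\delta$ sufficiently small each can be moved to the left-hand side, at the cost of replacing the constant $c_{12}$ by a smaller constant $c_{15}>0$.

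A key point in this absorption step is the handling of $\delta\int v_k^{p+\beta-3}|Dv_k|^2$: one uses $v_k\le v$ together with $p\ge 2$ to bound $v_k^{p+\beta-3} = v_k^{p-2}v_k^{\beta-1}\le v^{p-2}v_k^{\beta-1}$, so that this term fits under the second absorbing integral. This is precisely where the standing hypothesis $p\ge 2$ is used. After absorption the inequality reads, schematically,
\[
c_{15}\Big(\int_{\Omega_k} v_k^{(p+\beta-1)\frac{N}{N-2}}\,dx\Big)^{\frac{N-2}{N}} + \lambda \int_{\Omega_k}|Du|^2 \frac{v_k^\beta}{v}\,dx \le C\int_{\Omega_k}(\lambda u - f)^2 v_k^\beta v^{1-p}\,dx + C\int_{\Omega_k}f^2 v_k^{\beta+1-p}\,dx + C\int_{\Omega_k}v_k^{\beta+2\gamma-p+1}\,dx + C\int_{\Omega_k}v_k^{p+\beta-1}\,dx,
\]
where the last summand records the lower-order remainder left over by the Sobolev step inside Lemma \ref{t2s3}.

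To reach the form of \eqref{mainineq} I would then split the squared term via $(\lambda u-f)^2\le 2\lambda^2 u^2 + 2f^2$. The $u^2$ contribution directly matches the term $\lambda^2\int u^2 v_k^\beta v^{1-p}$ in \eqref{mainineq}. For every $f^2$-weighted integral (both the one produced by this split and the one already present) I would use the pointwise inequality $v^{1-p}\le v_k^{1-p}$ on $\Omega_k$, which is valid since $p\ge 2$ and $v\ge v_k>0$ there; this reduces all such terms to a multiple of $\int_{\Omega_k} f^2 v_k^{\beta+1-p}$. A final application of Young's inequality with conjugate exponents $r/2$ and $r/(r-2)$ for any $r>2$,
\[
f^2 v_k^{\beta+1-p}\;\le\; \tfrac{2}{r}|f|^r + \tfrac{r-2}{r}\,v_k^{(\beta+1-p)\frac{r}{r-2}},
\]
produces exactly the two summands $\int|f|^r$ and $\int v_k^{(\beta+1-p)\frac{r}{r-2}}$ on the right-hand side of \eqref{mainineq}. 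Relabelling constants yields the claim.

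The main obstacle is purely one of bookkeeping. One must verify that each of the five $\delta$-terms of Lemma \ref{t2s4} is dominated by a specific absorbing term from Lemma \ref{t2s3}, that the use of $p\ge 2$ appears only through the comparisons $v_k^{p+\beta-3}\le v^{p-2}v_k^{\beta-1}$ and $v^{1-p}\le v_k^{1-p}$, and that neither the split $(\lambda u-f)^2\le 2\lambda^2u^2+2f^2$ nor the final Young's step introduces any summand outside those already listed on the right-hand side of \eqref{mainineq}.
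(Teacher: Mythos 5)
Your proposal is correct and follows essentially the same route as the paper: combine Lemmas \ref{t2s3} and \ref{t2s4} with $\delta$ small to absorb the five $\delta$-terms (using $v_k\le v$ and $p\ge2$ for the term $v_k^{p+\beta-3}|Dv_k|^2$), then split $(\lambda u-f)^2$, use $v^{1-p}\le v_k^{1-p}$ on $\Omega_k$, and apply Young's inequality with exponents $r/2$ and $r/(r-2)$. Your bookkeeping of the Sobolev remainder $\int_{\Omega_k}v_k^{p+\beta-1}$ and the final relabelling of constants match the paper's argument.
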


\begin{proof}
Combining Lemma \ref{t2s3} and \ref{t2s4} with $\delta$ small enough (with respect to $c_4$) we get
\begin{multline}\label{ci1}
c_{12} \left(\int_{\Omega_k} v_k^{(p+\beta-1)\frac{N}{N-2}}\,dx\right)^{\frac{N-2}{N}} + \lambda \int_{\Omega_k}|Du|^2\frac{ v_{k}^{\beta}}{v}\,dx  \le 
c_{14} \int_{\Omega_k}v_{k}^{\beta+2\gamma - p + 1}\,dx 
\\
+
  c_{14}  \int_{\Omega_k} |f|^2v_{k}^{1-p+\beta}\,dx 
+ c_{12} \int_{\Omega_k} v_k^{p+\beta-1}
+  c_{14}\int_{\Omega_k} 2(\lambda u- f)^2 v_k^\beta v^{1-p}\,dx .
\end{multline}
To handle the  terms involving the datum $f$, we exploit that $v_k \leq v$, the inequality $(a+b)^2\leq 2(a^2+b^2)$, $a,b\geq0$, and we finally employ Young's inequality, so that 
\begin{multline*}
\int_{\Omega_k} (\lambda u- f)^2 v_k^\beta v^{1-p}\,dx  
+ \int_{\Omega_k} |f|^2v_{k}^{1-p+\beta}\,dx
\\
\leq c_{16}\left( \int_{\Omega_k} |f|^r\,dx 
+ \int_{\Omega_k} v_{k}^{(1-p+\beta)\frac{r}{r-2}}\,dx
+  \lambda^2 \int_{\Omega_k} |u|^2 v_k^\beta v^{1-p}\,dx \right) 
,
\end{multline*}
and \eqref{mainineq} follows.
\end{proof}

 Now we are ready to prove Theorem \ref{main2}.

 \proof[Proof Theorem \ref{main2}]
 
In view of the discussion at the beginning of this section, we show \eqref{crucialest}. 

\medskip

$\bullet$ We start supposing that  
$$q > \frac{N(\gamma-(p-1))}{\gamma} \qquad \mbox{ and } \quad \lambda = 0\,,$$
  so that \eqref{mainineq} reduces to
\begin{multline}\label{mainineqwl}
c_{15} \left(\int_{\Omega_k} v_k^{(p+\beta-1)\frac{N}{N-2}}\,dx\right)^{\frac{N-2}{N}}   \le  \int_{\Omega_k} |f|^r\,dx+  \int_{\Omega_k} v_{k}^{(1-p+\beta)\frac{r}{r-2}}\,dx \\
  +
 \int_{\Omega_k}v_{k}^{\beta+2\gamma - p + 1}\,dx
 + \int_{\Omega_k} v_k^{p+\beta-1} \, dx\ .
\end{multline}
First, note that by H\"older inequality we get
    \begin{equation} \label{Hol}
    \int_{\Omega_k}v_{k}^{p+\beta-1} \leq |\Omega_k|^{\frac2N} \left(\int_{\Omega_k}v_{k}^{(p+\beta-1)\frac{N}{N-2}}\,dx \right)^{\frac{N-2}{N}}\,, 
    \end{equation}
and notice that,  $|\Omega_k| \le \frac{1}{k} \|\sqrt{|Du_\eps|^2 + 1}\|_{L^1(\Omega)}$. Thus choosing $k$ sufficiently large in order to have $|\Omega_k|^{\frac2N} \leq \frac{c_6}{2}$, we can absorb the last integral in \eqref{mainineqwl} in the left hand side.

Now we choose the parameters $r$ and $\beta$ in the following way 
    \[
   r=\frac{2}{N}  \frac{N \big(\gamma-(p-1)\big)}{\gamma}+\frac{N-2}{N}q \quad \mbox{ and } \quad 
     \beta=(r-2)\gamma+p-1\,.
    \]
    It is immediate to verify that
    \[
 \frac{r}{r-2}(\beta-p+1)=r\gamma=\beta+\eta
  \qquad \mbox{   and } \qquad 
   (\beta+p-1)\frac{N}{N-2}=q \gamma.
   \]
Finally we have, since $\frac{N\big(\gamma-(p-1)\big)}{\gamma}<r<q$, 
   \begin{equation}\label{fyoung}
 \int_{\Omega_k}|f|^r\,dx\leq  \|f\|_{L^{q}(\Omega)}^{r}|\Omega_k|^{\frac{q-r}{q}},
   \end{equation}
   and so plugging \eqref{Hol} and  \eqref{fyoung} into \eqref{mainineqwl}, by the choices of the parameters $r$ and $\beta$ and since  $|\Omega_k| \to 0$,   we end up with
   \[
c_{17} \left(\int_{\Omega_k} v_k^{r\gamma}\right)^{\frac{N-2}{N}}\leq \int_{\Omega_k} v_k^{r\gamma}+\omega(|\Omega_k|), \qquad \mbox{ where } 
\omega(|\Omega_k|) \to 0 \quad \mbox{ as } \quad |\Omega_k| \to 0\,.
   \]
   Hence  \eqref{crucialest} holds.
   
   \medskip
   
 $\bullet$  If $\lambda > 0$, then one concludes in the very same way replacing $f$ by $f - \lambda u$.
\medskip

   $\bullet$
   Suppose now  $${q}=\frac{N\big(\gamma-(p-1)\big)}{\gamma}  \qquad \text{ and } \qquad \lambda = 0 \,, $$ 
and choose now $r=q$. We can repeat the same proof of the previous case, with the only difference that we require  $\|f\|_q$ to be small, in order to deduce  \eqref{crucialest}.

\medskip

   $\bullet$ When $\lambda>0$, we can further exploit the zero-th other terms to avoid the smallness assumption on the datum $f$. Recalling  \eqref{ci1}   we have that 
   \begin{multline}\label{ci2}
c_{18} \left(\int_{\Omega_k} v_k^{r\gamma }\,dx\right)^{\frac{N-2}{N}} + \lambda c_{18} \int_{\Omega_k}|Du|^2\frac{ v_{k}^{\beta}}{v}\,dx  
\le  \int_{\Omega_k} |f|^2 v_{k}^{1-p+\beta} \,dx+  \\
+ \lambda^2 \int_{\Omega_k}|u|^2v_k^\beta v^{1-p}\,dx  +  \int_{\Omega_k} v_k^{p+\beta-1}+
2 \int_{\Omega_k}v_{k}^{q \gamma}\,dx.
\end{multline}
On the left-hand side, since $v \ge v_k$,
 \[
\lambda c_{18} \int_{\Omega_k}|Du|^2\frac{ v_{k}^{\beta}}{v}\,dx = \lambda c_{18} \int_{\Omega_k}  v v_{k}^{\beta} dx - \lambda c_{18} \eps \int_{\Omega_k} \frac{ v_{k}^{\beta}}{v}\,dx \ge \\
\lambda c_{18} k\int_{\Omega_k}v_{k}^{\beta}\,dx - \lambda c_{18}  \eps \int_{\Omega_k} v_{k}^{\beta-1}\,dx.
 \]
 Note that the last term can be absorbed into the leftmost term of \eqref{ci2} through the Young's inequality. On the right-hand side of \eqref{ci2}, first, for any $\omega > 0$ (recall that $\gamma q = (\beta + 1-p)\frac{q}{q-2}$)
\[
\lambda^2 \int_{\Omega_k}|u|^2 v_k^\beta v^{1-p}\,dx \le \lambda^2 \int_{\Omega_k}|u|^2v_k^{\beta + 1-p}\,dx \le c_{19}(\lambda)  \int_{\Omega_k} |u|^{q}\,dx
 + c_{20}\int_{\Omega_k} v_{k}^{\gamma q}\,dx\,.
\]
 Then, we need to show that $ \int_{\Omega_k}  |f|^2    v_{k}^{ \beta-(p-1)} $ can be made small for $k$ large. We then write
\begin{align*}
 \int_{\Omega_k}  |f|^2    v_{k}^{ \beta-(p-1)}  dx&= \int_{\Omega_k\cap \left\{ |f|^2 \leq k^{\frac{\beta-(p-1)}{\beta}}\right\}} 
  |f|^2    v_{k}^{ \beta- (p-1)}  dx
+
\int_{\Omega_k\cap \left\{ |f|^2 \geq k^{\frac{\beta-(p-1)}{\beta}}\right\}} 
|f|^2    v_{k}^{ \beta- (p-1)}  dx
\\
&\leq 
\int_{\Omega_k }  k^{\frac{\beta-(p-1)}{\beta}} 
    v_{k}^{ \beta- (p-1)}  dx
+
\int_{\Omega_k\cap \left\{ |f|^2 \geq k^{\frac{\beta-(p-1)}{\beta}}\right\}} 
|f|^2    v_{k}^{ \beta- (p-1)}  dx
\\
&\leq 
\lambda c_{18} k  \int_{\Omega_k } 
    v_{k}^{ \beta }  dx
 +c_{21}  |\Omega_k|
 +
 \int_{\Omega_k\cap \left\{ |f|^2 \geq k^{\frac{\beta-(p-1)}{\beta}}\right\}} 
|f|^q     dx  + 
 \int_{\Omega_k} 
    v_{k}^{ \frac{q}{q-2}(\beta- (p-1))}  dx  
\\
&= 
\lambda c_{18} k \int_{\Omega_k } 
    v_{k}^{ \beta }  dx
 +\int_{\Omega_k } 
    v_{k}^{  \gamma q}  dx   + \omega'(|\Omega_k|),
  \end{align*}
  where $\omega'(|\Omega_k|)$ as $|\Omega_k| \to 0$ (note that $\omega'$ depends on $f$). The conclusion now follows as before, but the estimate depends in addition on $\lambda, \|u\|_{L^q(\Omega)}$.

 \end{proof}

\section{Final remarks}
We list here some final comments and open problems.

\begin{rem}\label{regdom}
We do not pursue here the direction of weakening the regularity of the domain. We just point out that it can be considerably weakened, as soon as the Sobolev inequality and the divergence theorem hold. These facts were thoroughly discussed for the Dirichlet and Neumann problem for second order elliptic equations in Chapter 3 of \cite{Grisvard}, and recently considered in \cite{CianchiMazyaCpaa,CianchiMazyaJMPA} for the $p$-Poisson equation.
\end{rem}

\begin{rem}\label{so}
An inspection throughout the proofs suggest that our integral Bernstein argument provides some new second order integral estimates. These have been discussed in \cite{GL} for sublinear problems, and lead to completely new properties for stationary quasilinear elliptic equations with superlinear gradient terms. In particular, the techniques of the present manuscript allow to retrieve a result of \cite{CianchiMazyaJEMS}, see Remark 5.8 in \cite{GL}.
\end{rem}
\begin{rem}
We underline that when $\lambda>0$ the estimates in Theorem \ref{main2} may depend in addition on $\|\lambda u\|_r$. This quantity can be a priori bounded in many different cases (though results appeared mostly for problems with Dirichlet boundary conditions): test function arguments can be used when $\gamma<p$, see \cite{GMP14}. Morever, $L^\infty$-estimates can be achieved for problems having critical gradient growth $\gamma=p$ (and $r > N/p$), see \cite{ADP,PorrSegura}, whilst better estimates at the level of H\"older spaces can be obtained in the supernatural growth case $\gamma>p$, see \cite{DP,Glio}. Finally, one can add a (linear) first-order term $b\cdot Du$ with $b\in L^s(\Omega)$ for $s$ suitably large in Theorems \ref{main1} and \ref{main2} without substantial modifications in the proofs, as done in e.g. \cite{BardiPerthame,GP}. \end{rem}
\begin{rem}
It is still unclear whether the result in Theorem \ref{main2} continues to hold in the subquadratic case $p\in(1,2)$, and this remains at this stage an open problem.
\end{rem}

\begin{rem}
While the estimate in Theorem \ref{main1} leads to an explicit bound with respect to the $L^q$-norm of $f$, which agrees with the ones for the $p$-Poisson equation, the dependence with respect to $f$ in Theorem \ref{main2} is much more implicit. It is still unclear if the maximal regularity bound could be proved in the stronger form of Theorem \ref{main1}, even in the linear case $\tilde a(t) \equiv \tilde a > 0$.
\end{rem}

\begin{rem}
The techniques of the present manuscript do not apply to general parabolic quasilinear problems (unless 
$\partial_t f$ is proven to be regular enough). However, some advances for the evolutive counterpart of \eqref{divappHJ} with sublinear growth can be found in the recent paper \cite{GL}. 
\end{rem}
 
\begin{rem} 
Let us observe that, for $f_\eps$ smooth enough,  any solution $u_\eps$ to 
\begin{equation}\label{divappHJ2rem}
\begin{cases}
\lambda u_\eps -\mathrm{div}((\eps+|Du_\eps|^2)^{\frac{p-2}{2}}   Du_\eps)+|Du_\eps|^\gamma=f_\eps (x)\qquad &\text{ in }\Omega\ ,\\
\partial_\nu u_\eps=0&\text{ on } \partial\Omega\,,
\end{cases}
\end{equation}
is smooth, for $\lambda > 0 $,  $p>1$ and $\gamma>p-1$.  Indeed by \cite[Proposition 7.1]{LPcpde} we deduce that $u_\eps \in W^{1,\infty} (\Omega)$. This implies in particular that the lower order term is bounded, so that the standard regularity results (see \cite{Lieb91}
) imply that $u_\eps\in C^{1,\alpha}$. 
Arguing by bootstrap we get that $ -\mathrm{div}((\eps+|Du_\eps|^2)^{\frac{p-2}{2}}   Du_\eps)$ is H\"older continuous, and so by Schauder estimates  for uniformly elliptic problems $u_\eps$ is $C^3$ up to the boundary of $\Omega$.
We stress that the first (delicate) step, i.e. the Lipschitz estimate, in principle holds only for the $p$-Laplacian, with $p>2$. Anyway it is not hard to extend the results contained in  \cite{LPcpde} to a larger class of operators of the form $- \text{ div } \big( a(|Du|) D u\big) $, where $a$ satisfies \eqref{Aa}--\eqref{A1} and a technical assumption that involves the behavior of $a''$  with respect to $a'$ (the presence of this additional assumption  is not surprising since the method used in  \cite{LPcpde}  is based on the derivation of the equation) that is trivially satisfied by the $p$-Laplacian. 
 \end{rem}

 \begin{rem}[Existence  of solutions]
The estimates proved in our main results yield to the proof of the  existence of (at least) a solution to 
\begin{equation}\label{probl}
\begin{cases}
\lambda u -\mathrm{div}(|Du |^{p-2}   Du )+|Du |^\gamma=f  (x)\qquad &\text{ in }\Omega\ ,\\
\partial_\nu u =0&\text{ on } \partial\Omega\,. 
\end{cases}
\end{equation}
Assume indeed that   $\lambda > 0 $,  $\gamma>p-1$,    $f\in L^q (\Omega)$,
and either 
 $p>2$,   and  $q \geq \max\big\{\frac{N(\gamma-(p-1))}{\gamma},2 \big\}$ or 
 $p>1$,   and  $q  >N$. Then there exists at least a weak solution to \eqref{probl} that belongs to $W^{1,\gamma q} (\Omega)$. Indeed arguing by approximation, we consider a sequence of  (smooth) solutions $u_\eps$ to \eqref{divappHJ2rem}. Thus applying Theorem \ref{main1} combined with Remark \ref{remu} (or Theorem \ref{main2}), we get the  bound of $D u_\eps$ in $L^{q \gamma} (\Omega)$, and we also easily deduce that 
$$
\lambda  \|   u_\eps   \|_{L^1 (\Omega)} \leq \|f_\eps\|_{L^1 (\Omega)} + \| |D u_\eps|^\gamma \|_{L^1 (\Omega)}\,. 
$$

Since $- \Delta_p u_\eps$ is bounded in $L^1 (\Omega)$, then, up to a (not relabeled) subsequence, $D u_\eps$ a.e. converges to $D u$ by Lemma 1 in \cite{BGae}, and thanks to the bound  given by Theorem \ref{main1} we deduce the compactness of $u_\eps $ in  $W^{1,r} (\Omega)$ for any $ 1\leq r<\gamma q$. This is enough to pass to the limit in the weak formulation of the approximating problem and get a solution  to \eqref{probl}.

\end{rem}

\end{document}